\documentclass[11pt,a4paper]{amsart}
\usepackage[left=3cm,right=3cm,top=3cm,bottom=3cm]{geometry}
\usepackage{epsfig,amsthm,amsmath,amsfonts,amssymb,mathtools}
\usepackage{color,hyperref,lineno}
\usepackage{tikz,pgfplots}
\usepackage[export]{adjustbox}
\usetikzlibrary{arrows,decorations,backgrounds,math,shapes,patterns}
\usepgfplotslibrary{groupplots}
\pgfplotsset{compat=newest} 
\colorlet{shadecolor}{gray! 40}
\newtheorem{theorem}{Theorem}[section]
\newtheorem{lemma}[theorem]{Lemma}
\theoremstyle{definition}
\newtheorem{definition}[theorem]{Definition}

\theoremstyle{remark}
\newtheorem{remark}[theorem]{Remark}
\numberwithin{equation}{section}
\definecolor{mygreen}{RGB}{28,172,0} 
\definecolor{mylilas}{RGB}{170,55,241}

\usepackage{algorithm}
\usepackage[indLines=true,noEnd=true]{algpseudocodex}  
\newcommand{\bs}{\boldsymbol}

\newcommand{\spn}{\operatorname{span}}
\def\letters{a,b,c,d,e,f,g,h,i,j,k,l,m,n,o,p,q,r,s,t,u,v,w,x,y,z}
\def\Letters{A,B,C,D,E,F,G,H,I,J,K,L,M,N,O,P,Q,R,S,T,U,V,W,X,Y,Z}
\makeatletter
\@for \@l:=\Letters \do{%
  \expandafter\edef\csname\@l bb\endcsname{%
  \noexpand\ensuremath{\noexpand\mathbb{\@l}}}%
  \expandafter\edef\csname\@l bf\endcsname{{\noexpand\bf \@l}}%
  \expandafter\edef\csname\@l cal\endcsname{%
  \noexpand\ensuremath{\noexpand\mathcal{\@l}}}%
  \expandafter\edef\csname\@l eu\endcsname{%
  \noexpand\ensuremath{\noexpand\EuScript{\@l}}}%
  \expandafter\edef\csname\@l frak\endcsname{%
  \noexpand\ensuremath{\noexpand\mathfrak{\@l}}}%
  \expandafter\edef\csname\@l rm\endcsname{{\noexpand\rm \@l}}%
  \expandafter\edef\csname\@l scr\endcsname{%
  \noexpand\ensuremath{\noexpand\mathscr{\@l}}}%
}
\@for \@l:=\letters \do{%
  \expandafter\edef\csname\@l bf\endcsname{{\noexpand\bf \@l}}%
  \expandafter\edef\csname\@l frak\endcsname{%
  \noexpand\ensuremath{\noexpand\mathfrak{\@l}}}%
  \expandafter\edef\csname\@l scr\endcsname{%
  \noexpand\ensuremath{\noexpand\mathscr{\@l}}}%
}
\makeatother
\newcommand{\isdef}{\mathrel{\mathrel{\mathop:}=}}

\definecolor{green}{rgb}{0,0.5,0}

\begin{document}

\title[Kernel Interpolation on Generalized Sparse Grids]
{Kernel Interpolation on Generalized Sparse Grids}
\author{Michael Griebel}
\address{Michael Griebel,
Institut f\"ur Numerische Simulation,
Universit\"at Bonn, Friedrich-Hirzebruch-Allee 7, 53115 Bonn, Germany
and
Fraunhofer Institute for Algorithms and Scientific Computing (SCAI), 
Schloss Birlinghoven, 53754 Sankt Augustin, Germany
}
\email{griebel@ins.uni-bonn.de}
\author{Helmut Harbrecht}
\address{
Helmut Harbrecht,
Departement Mathematik und Informatik,
Universit\"at Basel, Spiegelgasse 1, 4051 Basel, Switzerland
}
\email{helmut.harbrecht@unibas.ch}
\author{Michael Multerer}
\address{
Michael Multerer,
Istituto Dalle Molle di studi sull’intelligenza artificiale, Universit\`a della Svizzera italiana,
Via la Santa 1, 6962 Lugano, Switzerland
}
\email{michael.multerer@usi.ch}

\subjclass[2020]{Primary 41A46; Secondary 41A63, 46E35}
\keywords{Reproducing kernel Hilbert space, 
sparse grid, fast kernel approximation}

\begin{abstract}
We consider scattered data approximation on 
product regions of equal and different dimensionality. 
On each of these regions, we assume quasi-uniform 
but unstructured data sites and construct optimal sparse 
grids for scattered data interpolation on the product region. 
For this, we derive new improved error estimates for 
the respective kernel interpolation error by invoking
duality arguments. An efficient
algorithm to solve the underlying linear system 
of equations is proposed. The algorithm is based on the 
sparse grid combination technique, where a sparse
direct solver is used for the elementary anisotropic tensor 
product kernel interpolation problems. The application
of the sparse direct solver is facilitated by applying a
samplet matrix compression to each univariate kernel
matrix, resulting in an essentially sparse representation
of the latter. In this 
way, we obtain a method that is able to deal with large 
problems up to billions of interpolation points, especially
in case of reproducing kernels of nonlocal nature. 
Numerical results are presented to qualify and 
quantify the approach.
\end{abstract}
\maketitle

\section{Introduction}
Scattered data approximation using kernels is popular in many areas,
ranging from approximation theory to statistics. The approach facilitates the 
estimation of missing values in a dataset or to make predictions for new data 
sites based on the available data. Scattered data approximation is particularly
applied in imaging processing, surface reconstruction and machine learning, 
see for example \cite{Fasshauer,Schoelkopf,Theodoridis,Wendland} and the 
references therein. However, the naive computation of the
kernel approximate is known to suffer from the 
so-called \emph{curse of dimensionality} when the data dimension increases.

Various concepts exist to overcome the curse of dimensionality to a certain 
extent. A prominent approach is offered by \emph{sparse grids} or more general 
\emph{sparse tensor product spaces}, where the dimensions only mildly enter
in the cost estimates through a dimension-dependent power of a logarithmic 
factor, see \cite{BG,Smolyak,Zenger} for example. In this article, we aim 
at the construction and implementation of suitable sparse grids for the 
approximation of tensor product kernels. Interpreting kernel approximation 
in the context of Gaussian process learning, see \cite{RW}, the approach 
under consideration amounts to a multi-fidelity fusion model, see
e.g.~\cite{FLPW,PWG}, where the hierarchy of surrogate models is given 
by kernel approximates on a hierarchy of subspaces. A fundamental 
contribution to sparse grids for kernel approximation has recently 
been provided by \cite{Kempf1,Kempf2}. While the sparse grid construction therein 
relies on a multilevel approach invoking level dependent correlation lengths 
of the kernel function under consideration, we use here a kernel function of
fixed correlation length to construct the sparse grid interpolant. Especially, 
we discuss the optimality of the underlying sparse tensor product spaces and 
provide improved error estimates based on results in \cite{Schaback,Sloan}. 

The starting point for our construction is a 
tensor product Hilbert space
$$\bs{\mathcal{H}} = \bigotimes_{i=1}^m\mathcal{H}^{(i)},$$ 
formed by a finite collection of reproducing
kernel Hilbert spaces $\mathcal{H}^{(i)}$ with reproducing
kernels \(\kappa_i\), $i=1,\ldots,m$, defined on a
collection of bounded, Lipschitz-smooth regions 
$\Omega_i\subset\mathbb{R}^{d_i}$ of
relatively small and possibly different dimensions 
$d_i\in\mathbb{N}$. Associated to the tensor product 
reproducing kernel Hilbert space $\bs{\mathcal{H}}$,
we consider the product kernel $$\boldsymbol\kappa
({\bs x},{\bs y}) = \prod_{i=1}^m \kappa_i(x_i,y_i).$$ 
The kernel is the reproducing kernel of the space $\bs{\mathcal{H}}$ and
renders it itself a reproducing kernel Hilbert space defined on
the product region $\boldsymbol\Omega = \bigtimes_{i=1}^m\Omega_i$. 
Models of this type are applicable to multivariate interpolation
problems, where only scattered data are available within the unidirectional regions. 
Examples are environmental monitoring, multidimensional image and volume
reconstruction, such as magnetic resonance imaging, as well as simulation based
uncertainty quantification.

For the above setup, we construct an optimized sparse tensor product space 
to compute the kernel interpolant with respect to the underlying sparse grid. 
Employing results from \cite{GH1,GH2,Sloan}, we are able to derive improved 
error estimates for the sparse grid approximation error and related complexity 
bounds. To this end, we assume for each of the regions 
\(\Omega_i\) sets of quasi-uniform data sites. We propose
a simple algorithm to coarsen these 
sets in order to construct the
necessary multilevel hierarchy of approximation spaces for the sparse grid.
The implementation of the sparse grid and the computation of the
sparse grid interpolant is then based on the 
\emph{sparse grid combination technique} as introduced in \cite{GSZ,Smolyak}. 
This approach is known to successively 
compose the respective solution from the solutions to certain anisotropic
standard tensor product interpolation problems, see \cite{HPS,Kempf1,Kempf2}. 
We provide the details on the implementation of the sparse grid combination
technique as well as the storage and solution of the tensor product subproblems.
To solve the latter, we suggest the use of a direct solver 
that combines \emph{samplet matrix compression} with 
a \emph{sparse direct solver} as proposed in \cite{HM1,HM2}. This way,
the approach becomes computationally feasible, especially in case of 
nonlocal reproducing kernels. We present extensive numerical studies 
to qualify and quantify the approach.

The rest of the article is structured as follows: In 
Section~\ref{sec:prelim}, we introduce reproducing 
kernel Hilbert spaces and their basic theory. Then, in 
Section~\ref{sec:multivariate}, we define generalized sparse
grids and discuss their optimality concerning their complexity.
The numerical implementation and related algorithms are described
in Section~\ref{sct:implementation}. In Section~\ref{sct:numerix}, 
we perform numerical experiments which validate the present theory. 
Finally, in Section~\ref{sct:conclusio}, we draw some conclusions.

Throughout this article, to avoid the repeated use of unspecified 
generic constants, we write \(A \lesssim B\) if \(A\) is bounded 
by a uniform constant times \(B\), where the constant does not 
depend on any parameters which \(A\) and \(B\) might depend 
on. Similarly, we write \(A \gtrsim B\) if and only if \(B \lesssim A\).
Finally, if \(A \lesssim B\) and \(B \lesssim A\), we write \(A \sim B\).
Furthermore, the inequality \({\bs a}\leq{\bs b}\)
between two vectors has to be understood componentwise, i.e., \(a_i\leq b_i\)
for all \(i\). Likewise, \({\bs a}<{\bs b}\) means \(a_i < b_i\) for all \(i\).
\section{Preliminaries}\label{sec:prelim}
\subsection{Reproducing kernel Hilbert spaces}
Let $\Omega\subset\mathbb{R}^d$, $d\in\mathbb{N}$, 
be a Lipschitz-smooth region, which we assume to be 
bounded for the sake of simplicity. We start with the 
following definition:

\begin{definition}\label{def:RKHS}
A \emph{reproducing kernel} for a Hilbert space 
$\mathcal{H}$ of functions \(u\colon\Omega\to\mathbb{R}\)
with inner product $(\cdot,\cdot)_{\mathcal{H}}$ is a 
function $\kappa\colon\Omega\times\Omega\to\mathbb{R}$ such that
\begin{enumerate}
  \item $\kappa(\cdot,y)\in\mathcal{H}$ for all $y\in\Omega$,
  \item $u(y) = \big(u,\kappa(\cdot,y)\big)_\mathcal{H}$ 
  for all $u\in\mathcal{H}$ and all $y\in\Omega$.
\end{enumerate}
A Hilbert space $\mathcal{H}$ with reproducing kernel 
$\kappa\colon\Omega\times\Omega\to\mathbb{R}$ is called
\emph{reproducing kernel Hilbert space} (RKHS). 
\end{definition}

A continuous kernel $\kappa:\Omega\times\Omega\to\mathbb{R}$
is called \emph{positive semidefinite} on $\Omega\subset\mathbb{R}^d$
if 
\begin{equation}\label{eq:spd}
\sum_{i,j=1}^N \alpha_i\alpha_j
\kappa (x_i,x_j) \geq 0
\end{equation}
holds for all all mutually distinct
points $x_1,\ldots,x_N\in\Omega$ and 
all $\alpha_1,\dots,\alpha_N\in\mathbb{R}$, for any $N\in\mathbb{N}$. 
The kernel is even \emph{positive definite} if the inequality in 
\eqref{eq:spd} is strict whenever at least one \(\alpha_i\) 
is different from $0$.

Given a set $X = \{x_1,\ldots,x_N\}$ of $N$ mutually 
distinct data sites, we introduce the \emph{kernel translates}
$\phi_j \isdef \kappa(\cdot,x_j)$ for $j=1,\dots,N$. If the 
kernel \(\kappa\) is positive definite, these kernel translates 
span the $N$-dimensional subspace 
\[
\mathcal{H}_X \isdef  \spn\{\phi_1,\ldots,\phi_N\}\subset\mathcal{H}.
\]
The best approximation $f_X\in\mathcal{H}_X$ of a function
$f\in\mathcal{H}$ with respect to $\mathcal{H}$ amounts to its 
$\mathcal{H}$-orthogonal projection onto $\mathcal{H}_X$. The 
latter can be obtained as the solution of the variational 
formulation
\begin{equation}\label{eq:KerGalerkin}
  \text{find}\ f_X\in\mathcal{H}_X,\ \text{such that}\ 
  	(f_X,v)_{\mathcal{H}} = (f,v)_{\mathcal{H}}\ \ \text{for all $v\in\mathcal{H}_X$}.
\end{equation}
In view of the reproducing property, i.e., the second property from 
Definition~\ref{def:RKHS}, the ansatz $f_X = \sum_{i=1}^N 
\alpha_i\phi_i$ leads to the linear system of equations
\[
  {\bs K}\bs\alpha = \bs f
\]
with the kernel matrix
\begin{equation*}
  {\bs K} = \begin{bmatrix} 
  \kappa(x_1,x_1)&\cdots&\kappa(x_1,x_N)\\
  \vdots&\ddots&\vdots\\
  \kappa(x_N,x_1)&\cdots&\kappa(x_N,x_N)\end{bmatrix}
\end{equation*}
and the right-hand side $\bs f = 
[f(x_1),\dots,f(x_N)]^T$.

In particular, we observe that the resulting
system of equations coincides with the one for the
generalized Vandermonde matrix for the interpolation
at the data sites in $X$, i.e.
\[
  u(x_j) = \sum_{i=1}^N \alpha_i\phi_i(x_j) \overset{!}{=} f(x_j)
  \quad\text{for $j=1,\dots,N$}.
\]
This means that, within the RKHS framework, 
the best approximation $u\in\mathcal{H}_X$ of a function $f\in\mathcal{H}$ 
is given by the interpolant for the data sites $X$.
This is also referred to as \emph{kernel interpolation}.
Since kernel interpolation works on arbitrarily unstructured sets of 
data sites, 
it is often used to approximate scattered data. Such scattered data 
can be found in computer graphics, but also in machine learning of 
high-dimensional data sets, see e.g., \cite{Fasshauer,Wendland}.

\subsection{Error estimates}\label{subsct:error}
Having fixed the kernel of interest, we consider the problem of function 
approximation. We are interested in recovering an unknown function 
$ f \in \mathcal{H}$, given only a finite data set
\[
\{(x_1, f_1),\ldots,(x_N, f_N)\}\subset\Omega\times \mathbb{R}.
\]
We collect the data sites in the set $X\isdef \{ x_1,\dots, x_N \} \subset\Omega$. 
Associated to this set, we define two characteristic quantities,
namely the \emph{fill distance} 
\[
h_{X,\Omega} \isdef  \sup_{x \in \Omega} \min_{x_i \in X} \| x - x_i \|_2
\]
and the \emph{separation distance}
\[
q_X \isdef\min_{i \neq j} \| x_i - x_j \|_2.
\]
For the theoretical results presented later, we require that 
the set of data sites is \emph{quasi-uniform}, i.e., there 
is a constant $ c_{\operatorname{qu}} > 0 $ such that $q_X 
\leq h_{X,\Omega} \leq c_{\operatorname{qu}} q_X$. But note
that the subsequent error estimates do not require 
quasi-uniformity of $X$. Quasi-uniformity 
is merely required to bound the complexity, since then 
a comparison of volumes yields for the number $|X|=N$ of data
sites the relation $N\sim h_{X,\Omega}^{-d}$,
see, e.g., \cite[Proposition 14.1]{Wendland}.

If the norm in $\mathcal{H}$ is isomorphic to the 
norm in Sobolev space $H^s(\Omega)$ with $s>d/2$, 
i.e., if there holds $\|f\|_{\mathcal{H}}\sim
\|f\|_{H^s(\Omega)}$ for all $f\in\mathcal{H}$, 
then we have the following error estimate
\begin{equation}\label{eq:error1A}
  \|f-f_X\|_{L^2(\Omega)}\lesssim h_{X,\Omega}^s \|f\|_{H^s(\Omega)},
\end{equation}
compare \cite{Wendland}. If there even holds 
\begin{equation}\label{eq:HsCSU}
  (u,v)_{\mathcal{H}}
  	\lesssim \|u\|_{L^2(\Omega)} \|v\|_{H^{2s}(\Omega)}
\end{equation}
for all $u\in\mathcal{H}$ and $v\in H^{2s}(\Omega)$,
then using \cite[Theorem 1]{Sloan} we may double the 
rate of convergence with respect to $L^2(\Omega)$,
when the data provide additional smoothness in terms of 
$f\in H^{2s}(\Omega)$. For the reader's convenience, 
we recall the proof of the respective estimate here.

\begin{lemma}\label{lem:doubling}
Let $\Omega\subset\mathbb{R}^d$ be sufficiently smooth
and let \(f_X\) be the solution to \eqref{eq:KerGalerkin} 
with respect to \(\Hcal_X\subset\mathcal{H}\). Then, 
there holds
\begin{equation}\label{eq:error1B}
  \|f-f_X\|_{L^2(\Omega)}\lesssim h_{X,\Omega}^{2s} \|f\|_{H^{2s}(\Omega)}
\end{equation}
whenever $f\in {H^{2s}(\Omega)}$.
\end{lemma}

\begin{proof}
We apply \eqref{eq:error1A} to $g\isdef f-f_X$ and 
note that it belongs to $\mathcal{H}$ since $f\in H^{2s}(\Omega)
\subset\mathcal{H}$ and $f_X\in\mathcal{H}_X\subset\mathcal{H}$. 
In view of \eqref{eq:error1A}, we find
\[
\|g-g_X\|_{L^2(\Omega)}\lesssim 
h_{X,\Omega}^s\|g\|_{\mathcal{H}}.
\]
Since $g_X = f_X-f_X = 0$, this implies
\[
\|f-f_X\|_{L^2(\Omega)}\lesssim h_{X,\Omega}^s\|f-f_X\|_{\mathcal{H}}.
\]
We now conclude by the Galerkin orthogonality 
$f-f_X\perp_{\mathcal{H}}\mathcal{H}_X$ that
\begin{align*}
\|f-f_X\|_{L^2(\Omega)}^2&\lesssim h_{X,\Omega}^{2s}\|f-f_X\|_{\mathcal{H}}^2\\
&= h_{X,\Omega}^{2s}(f-f_X,f)_{\mathcal{H}}\\
&\lesssim h_{X,\Omega}^{2s}\|f-f_X\|_{L^2(\Omega)}\|f\|_{H^{2s}{\Omega)}}.
\end{align*}
The result follows now by dividing by the factor $\|f-f_X\|_{L^2(\Omega)}$.
\end{proof}

In what follows, we shall assume without loss of generality 
that \(\Hcal\) is equipped with an inner product such that \eqref{eq:HsCSU} 
holds.\footnote{An inner product that satisfies 
\eqref{eq:HsCSU} is constructed in Appendix~\ref{app:A}. Nonetheless,
the analysis presented in the following also applies with obvious 
modifications to the situation that \eqref{eq:HsCSU} does not hold. 
We refer the reader to Section~\ref{sct:conclusio} for the final 
result which is then obtained.} Then, from \eqref{eq:HsCSU} 
and \eqref{eq:error1B}, we can also derive an error estimate with 
respect to the energy space $\mathcal{H}$. By using again the 
orthogonality $f-f_X\perp_{\mathcal{H}}\mathcal{H}_X$, we conclude
\begin{align*}
  \|f-f_X\|_{\mathcal{H}}^2 &=
  	(f-f_X,f)_{\mathcal{H}}\\
  	&\lesssim \|f-f_X\|_{L^2(\Omega)}
  	\|f\|_{H^{2s}(\Omega)}\\
	&\lesssim h_{X,\Omega}^{2s}\|f\|_{H^{2s}(\Omega)}^2,
\end{align*}
which implies the desired error estimate with 
respect to the energy space, i.e.,
\begin{equation}\label{eq:error1C}
  \|f-f_X\|_{\mathcal{H}}\lesssim h_{X,\Omega}^s \|f\|_{H^{2s}(\Omega)}.
\end{equation}

In view of \eqref{eq:error1A}, \eqref{eq:error1B}, and
\eqref{eq:error1C}, we may employ standard interpolation arguments 
to summarize the above error estimates in accordance with
\begin{equation}\label{eq:error1}
  \|f-f_X\|_{H^t(\Omega)}\lesssim h_{X,\Omega}^{t'-t} \|f\|_{H^{t'}(\Omega)},
  \quad 0\le t\le s\le t' \le 2s.
\end{equation}

\subsection{Multilevel sequences}
We consider a sequence of quasi-uniform sets of
data sites
\begin{equation}\label{eq:multiscale}
  X_0\subset X_1\subset X_2\subset\cdots\subset\Omega
\end{equation}
such that $h_j \isdef  h_{X_j,\Omega}\sim 2^{-j}$ and, 
consequently, $|X_j|\sim 2^{jd}$. Associated
to the sequence of sets of data sites, we obtain the multilevel 
hierarchy of finite dimensional approximation spaces
\[
  \mathcal{H}_0\subset\mathcal{H}_1\subset\mathcal{H}_2
  	\subset\cdots\subset\mathcal{H},
\]
where we write $\mathcal{H}_j\isdef\mathcal{H}_{X_j}$ for
the sake of simplicity.

Let 
\begin{equation}\label{eq:orthProj}
P_j\colon\mathcal{H}\to\mathcal{H}_j
\end{equation}
denote the $\mathcal{H}$-orthogonal 
projection onto $\mathcal{H}_j$ and define the
\emph{detail projection}
\begin{equation}\label{eq:detProj}
Q_j \isdef  P_j-P_{j-1},
\end{equation} 
where we set $P_{-1}\isdef 0$, i.e., $Q_0 = P_0$. 
Fixing a maximum level \(J\in\Nbb\), the 
detail projections
$Q_j$ give rise to the $\mathcal{H}$-orthogonal decomposition
\[
  \mathcal{H}_J = \bigoplus_{j=0}^J \mathcal{W}_j,
  	\quad \text{where}\ \mathcal{W}_j \isdef  Q_j(\mathcal{H}).
\]
Especially, the error estimate \eqref{eq:error1} implies
\begin{equation}\label{eq:error2}
\begin{aligned}
  \|Q_j f\|_{H^t(\Omega)}
  &\le \|f-P_j f\|_{H^t(\Omega)} + \|f-P_{j-1} f\|_{H^t(\Omega)}\\
  &\lesssim h_j^{t'-t} \|f\|_{H^{t'}(\Omega)},
\end{aligned}
\end{equation}
for all $0\le t\le s\le t' \le 2s$ provided that $f\in H^{t'}(\Omega)$.

\section{Multivariate setting}
\label{sec:multivariate}
\subsection{Tensor product spaces}
\label{subsec:FullTensorProduct}
We consider $m\in\Nbb$ possibly distinct RKHS 
$\mathcal{H}^{(1)},\dots,\mathcal{H}^{(m)}$ with reproducing 
kernels $\kappa_1(x_1,y_1),\dots,\kappa_m(x_m,y_m)$ and 
associated regions $\Omega_1\subset\mathbb{R}^{d_1}, \dots,$ 
$\Omega_m\subset\mathbb{R}^{d_m}$, respectively. We are interested
in the efficient approximation of functions in the 
tensor product space
\[
  \bs{\mathcal{H}} \isdef  \bigotimes_{i=1}^m \mathcal{H}^{(i)}.
\]
Of course, this is again an RKHS with reproducing kernel
in product form
\[
  \bs\kappa({\bs x},{\bs y}) \isdef  \kappa_1(x_1,y_1)
  	\cdots\kappa_m(x_m,y_m),
\]
where ${\bs x} = (x_1,\dots,x_m),{\bs y} = (y_1,\dots,y_m)
\in{\bs\Omega}$ with $\bs\Omega\isdef\Omega_1\times\cdots\times\Omega_m$ 
denoting the $m$-fold 
product region.

For each \(i=1,\ldots,m\), we assume the existence of
a nested sequence of sets of data sites, i.e.,
\[
  X_0^{(i)}\subset X_1^{(i)}\subset X_2^{(i)}\subset\cdots\subset\Omega_i,
\]
such that $h_j^{(i)} \isdef  h_{X_j^{(i)},\Omega_i}\sim 2^{-j}$. This yields
associated multiscale hierarchies of finite dimensional approximation
spaces
\[
  \mathcal{H}_0^{(i)}\subset\mathcal{H}_1^{(i)}\subset\mathcal{H}_2^{(i)}
  	\subset\cdots\subset\mathcal{H}^{(i)},\quad i=1,\ldots,m,
\]
with $\mathcal{H}_j^{(i)} \isdef  \mathcal{H}_{X_j^{(i)}}^{(i)}$. 
Given a multi-index ${\bs j} = [j_1,\ldots,j_m]\in\mathbb{N}_0^m$, 
we can define the \emph{tensor product grid}
\[
  {\bs X}_{\bs j} \isdef  X_{j_1}^{(1)}\times\cdots\times X_{j_m}^{(m)}
  	\subset{\bs\Omega}
\]
with associated tensor product approximation space 
\[
  \bs{\mathcal{H}}_{\bs j} \isdef  \spn\{\bs\kappa(\cdot,{\bs x}):
  	{\bs x}\in {\bs X}_{\bs j}\} = \mathcal{H}_{j_1}^{(1)}\otimes
		\cdots\otimes\mathcal{H}_{j_m}^{(m)}\subset\bs{\mathcal{H}}.
\]

Given a function $f\in \bs{\mathcal{H}}$,
the kernel interpolant $f_{\bs j}\in \bs{\mathcal{H}}_{\bs j}$
with respect to the tensor product grid ${\bs X}_{\bs j}$ 
is retrieved by solving the linear system of equations
\begin{equation}\label{eq:tpsystem}
   {\bs K}_{\bs j}{\bs\alpha}_{\bs j} = {\bs f}_{\bs j}.
\end{equation}
Herein, the kernel matrix ${\bs K}_{\bs j}$ is defined as
the Kronecker product 
\[
{\bs K}_{\bs j}\isdef{\bs K}_{j_1}^{(1)}\otimes\cdots\otimes {\bs K}_{j_m}^{(m)}
\]
of the univariate kernel matrices 
\[
{\bs K}_{j_i}^{(i)} = [\kappa_i(x_k,
y_k)]_{x_k,y_k\in X_{j_i}^{(i)}},
\] 
while the right hand side is defined as
\(
{\bs f}_{\bs j} = [f({\bs x}_{\bs k})]_{{\bs x}_{\bs k}\in{\bs X}_{\bs j}}.
\)
Since the kernel interpolant is the best approximation
in each of the univariate subspaces, it is evident that
\({\bs u}_{\bs j}\) is the best approximation of
$u\in {\bs{\mathcal{H}}}$ in the subspace ${\bs{\mathcal{H}}}_{\bs j}$ 
with respect to the norm in ${\bs{\mathcal{H}}}$. Since the 
number of interpolation points 
\[
|{\bs X}_{\bs j}| = \prod_{i=1}^m \big|X_{j_i}^{(i)}\big| = \prod_{i=1}^m 2^{j_id_i}
\]
grows exponentially in $m$, the computation of \({\bs \alpha}_{\bs j}\) 
suffers from the curse of dimension.

\subsection{Sparse tensor product spaces}
A way to mitigate the curse of dimension is to
employ \emph{sparse tensor product approximation}. To this 
end, we introduce the $\bs{\mathcal{H}}_{\bs j}$-orthogonal
detail projections, cf.\ \eqref{eq:detProj}, 
\[  
   {\bs Q}_{\bs j}\colon
   \bs{\mathcal{H}}\to\bs{\mathcal{H}}_{\bs j},\quad
   {\bs Q}_{\bs j} \isdef  Q_{j_1}^{(1)}\otimes\cdots\otimes Q_{j_m}^{(m)},\quad{\bs j}\geq{\bs 0}.
\]
We assume that the univariate spaces $\mathcal{H}^{(i)}$ 
are equivalent to Sobolev spaces $H^{s_i}(\Omega_i)$ for all 
$i=1,\ldots,m$ and for the vector of Sobolev indices 
${\bs s} = [s_1,\ldots,s_m]^T$. Moreover, for 
${\bf t}=[t_1,\ldots,t_m]^T\ge{\bf 0}$, we 
introduce the tensor product Sobolev space
\[
{\bs H}^{\bs t}(\bs\Omega)\isdef
H^{t_1}(\Omega_1)\otimes\cdots\otimes H^{t_m}(\Omega_m).
\]
This tensor product 
Sobolev space is frequently also called Sobolev space of functions 
with dominating mixed derivatives. 

In view of \eqref{eq:error2}, we conclude by standard 
tensor product arguments the decay estimate
\begin{equation}\label{eq:estimate Q_j}
  \|{\bs Q}_{\bs j}f\|_{{\bs H}^{\bs t}(\bs\Omega)}
  	\lesssim {\bs h}_{\bs j}^{{\bs t}'-{\bf t}}
  		\|f\|_{{\bs H}^{{\bs t}'}(\bs\Omega)},
  			\quad {\bs 0}\le {\bs t}\le {\bs s}\le {\bs t}' \le 2{\bs s},
\end{equation}
where 
\[
{\bs h}_{\bs j}^{{\bs t}'-{\bf t}}
\isdef \big(h_{j_1}^{(1)}\big)^{t_1'-t_1}\cdots \big(h_{j_m}^{(m)}\big)^{t_m'-t_m},
\]
as usual for powers of vectors with matching dimensions.

Next, we define \emph{sparse tensor product spaces}. To this 
end, we introduce a weight vector ${\bf 0}<\bs{w} = [w_1,\ldots,w_m]^T$ such that 
$\|\bs{w}\|_{\infty} = 1$. The (weighted)
sparse tensor product space of level \(J\in\Nbb\) is then defined by
\[
  \widehat{\bs{\mathcal{H}}}_J^{\bs w}
  	= \bigoplus_{{\bs j}^T{\bs w}\le J} \bs{\mathcal{W}}_{\bs j},
  	\quad \text{where}\ \bs{\mathcal{W}}_{\bs j} 
		\isdef  {\bs Q}_{\bs j}(\bs{\mathcal{H}}).
\]
Corresponding to $\widehat{\bs{\mathcal{H}}}_J^{\bs w}$,
we define the \emph{sparse grid projection}
\[
  \widehat{\bs P}_J^{\bs w}\colon{\bs{\mathcal{H}}}
  	\to\widehat{\bs{\mathcal{H}}}_J^{\bs w},\quad \widehat{\bs P}_J^{\bs w}f
		= \sum_{{\bs j}^T{\bs w}\le J}\big(Q_{j_1}^{(1)}\otimes 
			\cdots\otimes Q_{j_m}^{(m)}\big)f,
\]
which yields the sparse grid kernel interpolant $\widehat{u}_J^{\bs w}
= \widehat{\bs P}_J^{\bs w}f\in\widehat{\bs{\mathcal{H}}}_J^{\bs w}$ 
of a given function $f\in\bs{\mathcal{H}}$.

\subsection{Error estimates}
In \cite{GH1,GH2}, the construction of generalized 
sparse tensor product spaces has been considered. 
Following the theory provided therein, we derive
the following results:

\begin{theorem}[Convergence]\label{thm:accuracy}
Let ${\bs 0}\le{\bs t}<{\bs s}<{\bs t}'\le 2{\bs s}$ 
and $f\in {\bs H}^{{\bs t}'}(\bs\Omega)$. Then, there 
holds the error estimate
\begin{equation}	\label{eq:convergence rate}
  \big\|f-\widehat{\bs P}_J^{\bs w}f\big\|_{{\bs H}^{\bs t}(\bs\Omega)}
    \lesssim 2^{-J\min\{\frac{t_1'-t_1}{w_1},\ldots,\frac{t_m'-t_m}{w_m}\}} 
    J^{P-1}\|f\|_{{\bs H}^{{\bs t}'}(\bs\Omega)}.
\end{equation}
Here, $P\in\Nbb$ counts how often the minimum is attained 
in the exponent.
\end{theorem}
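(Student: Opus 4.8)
The plan is to expand $(I-\widehat{\bs P}_J^{\bs w})f$ in the orthogonal detail decomposition $f = \sum_{{\bs j}\geq{\bs 0}}{\bs Q}_{\bs j}f$, so that the error consists precisely of the tail over the complementary index set,
\[
\big(I-\widehat{\bs P}_J^{\bs w}\big)f = \sum_{{\bs j}^T{\bs w}> J}{\bs Q}_{\bs j}f.
\]
First I would take the ${\bs H}^{\bs t}(\bs\Omega)$-norm and bound it by the triangle inequality against $\sum_{{\bs j}^T{\bs w}>J}\|{\bs Q}_{\bs j}f\|_{{\bs H}^{\bs t}(\bs\Omega)}$; strictly speaking one should be a little careful since the ${\bs Q}_{\bs j}f$ are orthogonal in $\bs{\mathcal{H}}$ but not in ${\bs H}^{\bs t}(\bs\Omega)$, so I would either invoke a Cauchy--Schwarz/$\ell^2$ argument exploiting that the exponents in \eqref{eq:estimate Q_j} are strict (the condition ${\bs t}<{\bs s}<{\bs t}'$ gives room), or simply sum absolutely. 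Then apply the decay estimate \eqref{eq:estimate Q_j} together with $h_{j_i}^{(i)}\sim 2^{-j_i}$ to get, up to constants,
\[
\big\|\big(I-\widehat{\bs P}_J^{\bs w}\big)f\big\|_{{\bs H}^{\bs t}(\bs\Omega)}
\lesssim \|f\|_{{\bs H}^{{\bs t}'}(\bs\Omega)}\sum_{{\bs j}^T{\bs w}>J} 2^{-{\bs j}^T({\bs t}'-{\bs t})},
\]
where I abbreviate ${\bs j}^T({\bs t}'-{\bs t}) = \sum_{i=1}^m j_i(t_i'-t_i)$.

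The heart of the argument is then the estimation of the geometric-type sum $S_J\isdef\sum_{{\bs j}^T{\bs w}>J} 2^{-{\bs j}^T({\bs t}'-{\bs t})}$. Setting $a_i\isdef (t_i'-t_i)/w_i>0$ and $\mu\isdef\min_i a_i$, I would write $2^{-{\bs j}^T({\bs t}'-{\bs t})} = \prod_i 2^{-a_i w_i j_i}$ and compare the exponent with $\mu\,{\bs j}^T{\bs w}$: since $a_i\geq\mu$, we have ${\bs j}^T({\bs t}'-{\bs t}) = \sum_i a_i w_i j_i \geq \mu\,{\bs j}^T{\bs w}$, but that alone only gives $S_J\lesssim\sum_{\ell>J}(\#\{{\bs j}:{\bs j}^T{\bs w}\approx\ell\})2^{-\mu\ell}$ and one must control the counting function. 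The cleaner route is to split off the indices $i\in\mathcal{P}$ realizing the minimum ($a_i=\mu$) from those with $a_i>\mu$: for the latter the geometric series converges and contributes an $O(1)$ factor after summation, while for the former one is left with a sum of the shape $\sum_{{\bs j}_{\mathcal P}^T{\bs w}_{\mathcal P}>J'} 2^{-\mu\,{\bs j}_{\mathcal P}^T{\bs w}_{\mathcal P}}$ over $|\mathcal P|=P$ coordinates, whose level sets $\{{\bs j}_{\mathcal P}^T{\bs w}_{\mathcal P}\in[\ell,\ell+1)\}$ have cardinality $O(\ell^{P-1})$. Summing $\sum_{\ell>J}\ell^{P-1}2^{-\mu\ell}\lesssim J^{P-1}2^{-\mu J}$ (the standard estimate for such a series, dominated by its first term up to a polynomial factor) yields $S_J\lesssim J^{P-1}2^{-\mu J}$, which is exactly \eqref{eq:convergence rate}.

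I expect the main obstacle to be precisely the combinatorial bookkeeping of this multi-dimensional geometric sum, in particular justifying the $J^{P-1}$ factor: one needs the polynomial bound on the number of lattice points ${\bs j}$ with ${\bs j}^T{\bs w}$ in a unit window (which for irrational or incommensurable weights $w_i$ requires a genuine argument, e.g.\ comparing with the volume of a simplex slab, rather than an exact count), and one must check that the contribution of the non-minimal directions really is a bounded multiplicative constant uniformly in $J$. A secondary technical point is the norm equivalence $\|\cdot\|_{\mathcal{H}^{(i)}}\sim\|\cdot\|_{H^{s_i}(\Omega_i)}$ and the passage from \eqref{eq:error2} to its tensor-product form \eqref{eq:estimate Q_j}, which I would treat as already granted by the ``standard tensor product arguments'' referenced in the text and by \cite{GH1,GH2}; no new idea is needed there. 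Everything else (triangle inequality, inserting $h_j\sim 2^{-j}$) is routine.
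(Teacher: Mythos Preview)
Your proposal is correct and follows essentially the same route as the paper: expand the error as the tail $\sum_{{\bs j}^T{\bs w}>J}{\bs Q}_{\bs j}f$, apply the triangle inequality and the decay bound \eqref{eq:estimate Q_j} with $h_{j_i}^{(i)}\sim 2^{-j_i}$, and then estimate the resulting multi-index geometric sum. The only difference is that the paper does not spell out the combinatorial estimate for $S_J$ but instead invokes \cite[Theorem~4.2]{GH2} verbatim, whereas you outline the argument directly (splitting minimal versus non-minimal directions and counting lattice points in weighted slabs); your sketch is a faithful rendering of what that reference does.
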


\begin{proof}
We have by the triangle inequality and by 
\eqref{eq:estimate Q_j} that
\[
   \big\|f-\widehat{\bs P}_J^{\bs w}f\big\|_{{\bs H}^{\bs t}(\bs\Omega)}
   \le\sum_{{\bs j}^T{\bs w}>J} \|{\bs Q}_{\bs j}f\|_{{\bs H}^{\bs t}(\bs\Omega)}
   \lesssim \sum_{{\bs j}^T{\bs w}>J} {\bs h}_{\bs j}^{{\bs t}-{\bf t}'}
  		\|f\|_{{\bs H}^{\bs t}(\bs\Omega)}.
\]
Due to $h_{j_i}^{(i)} \sim 2^{-j}$ for all $i=1,2,\ldots,m$ and hence 
${\bs h}_{\bs j}\sim 2^{-|{\bs j}|}$, we can now follow line-by-line 
the proof of \cite[Theorem~4.3]{GH2} and obtain the desired estimate.
\end{proof}

\begin{remark}\label{rem:logs}
Estimate \eqref{eq:convergence rate} remains valid without the
logarithmic factor $J^{P-1}$ in the case ${\bs t} = {\bs t}' = {\bs s}$ due 
to the Galerkin orthogonality in accordance with
\[
   \big\|f-\widehat{\bs P}_J^{\bs w}f\big\|_{\bs{\mathcal{H}}}^2
   =\sum_{{\bs j}^T{\bs w}>J} \|{\bs Q}_{\bs j}f\|_{\bs{\mathcal{H}}}^2
   \le\|f\|_{\bs{\mathcal{H}}}^2.
\]
As a consequence, if ${\bs t} = {\bs s}$ and ${\bs s} < {\bs t}'$, 
the logarithmic factor in \eqref{eq:convergence rate} is only 
$J^{(P-1)/2}$. Likewise, by applying the Aubin-Nitsche lemma, 
one concludes only the factor $J^{(P-1)/2}$ if ${\bs t} < {\bs s}$ and 
${\bs s} = {\bs t}'$, which improves the result of \cite{Kempf1,Kempf2}.
\end{remark}

We shall next count the degrees of freedom, i.e.,
the dimension, of the sparse 
tensor product space $\widehat{\bs{\mathcal{H}}}_J^{\bs w}$.

\begin{theorem}[Complexity]	\label{thm:complexity}
For any ${\bs w}>{\bf 0}$, the dimension of the sparse 
tensor product space $\widehat{\bs{\mathcal{H}}}_J^{\bs w}$
is proportional to $2^{J\max\{d_1/w_1,\ldots,d_m/w_m\}} J^{R-1}$, 
where $R\in\Nbb$ counts how often the maximum is attained.
\end{theorem}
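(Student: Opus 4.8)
The plan is to count the dimension of $\widehat{\bs{\mathcal{H}}}_J^{\bs w}$ by summing the dimensions of the detail spaces $\bs{\mathcal{W}}_{\bs j}$ over the index set $\{{\bs j}\ge{\bf 0}:{\bs j}^T{\bs w}\le J\}$. First I would record that $\dim\bs{\mathcal{W}}_{\bs j}=\prod_{i=1}^m\dim\mathcal{W}_{j_i}^{(i)}$, and that $\dim\mathcal{W}_{j_i}^{(i)}=|X_{j_i}^{(i)}|-|X_{j_i-1}^{(i)}|\sim 2^{j_i d_i}$ for $j_i\ge 1$, with $\dim\mathcal{W}_0^{(i)}=|X_0^{(i)}|\sim 1$, which follows from the assumption $h_j^{(i)}\sim 2^{-j}$ and the volume comparison $|X_j^{(i)}|\sim 2^{j d_i}$ for quasi-uniform sets. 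Hence $\dim\bs{\mathcal{W}}_{\bs j}\sim 2^{{\bs j}^T{\bs d}}$ where ${\bs d}=[d_1,\ldots,d_m]^T$ (the estimate being uniform in ${\bs j}$ once one checks the low-index boundary cases), and therefore
\[
  \dim\widehat{\bs{\mathcal{H}}}_J^{\bs w}\sim\sum_{{\bs j}\ge{\bf 0},\ {\bs j}^T{\bs w}\le J}2^{{\bs j}^T{\bs d}}.
\]

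The core is then to estimate this weighted geometric sum. I would substitute $k_i=j_i w_i$ (so the constraint becomes $\sum k_i\le J$ with $k_i$ ranging over the lattice $w_i\mathbb{N}_0$) and observe that the summand equals $2^{\sum_i (d_i/w_i)k_i}$. Writing $\gamma\isdef\max_i d_i/w_i$ and letting $R$ be the number of indices $i$ attaining this maximum, the sum is dominated by the contribution of those $R$ ``critical'' directions: along each such direction the summand grows like $2^{\gamma k_i}$, so the partial sum is a geometric series whose total is comparable to its largest term $2^{\gamma J}$; the remaining $m-R$ ``subcritical'' directions contribute only a bounded factor when summed out, since their geometric series in $k_i$ with ratio strictly below $2^{\gamma}$ converge geometrically. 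Summing $2^{\gamma J}$ over the $R$-dimensional simplex $\{(k_{i_1},\ldots,k_{i_R})\ge{\bf 0}:\sum k_{i_\ell}\le J\}$ — whose number of lattice points scaled appropriately is $\sim J^{R-1}$ when the largest term sits on the boundary face — produces the factor $J^{R-1}$. This is exactly the standard sparse-grid counting argument; it can be carried out cleanly by induction on $R$, splitting off one critical direction at a time, or by comparison with the integral $\int_{\sum x_i\le J}2^{\gamma\max\text{-term}}\,dx$.

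I would organize the writeup around this induction: the base case $R=1$ is a single geometric series times a bounded prefactor, giving $2^{\gamma J}$ with no logarithm; the inductive step peels off one critical coordinate $k_{i_R}$, splits the remaining sum at, say, $k_{i_R}\le J/2$ versus $k_{i_R}>J/2$, and in each half applies the hypothesis with $R-1$ critical directions and a reduced budget, accumulating one extra power of $J$. The subcritical directions ride along as a fixed multiplicative constant throughout, absorbed into the $\lesssim$. For the matching lower bound $\gtrsim 2^{\gamma J}J^{R-1}$, it suffices to restrict the sum to the $R$ critical directions with all other $j_i=0$ and keep only terms with ${\bs j}^T{\bs w}\in[J/2,J]$, of which there are $\sim J^{R-1}$ each of size $\gtrsim 2^{\gamma J/2}\cdot 2^{\gamma J/2}$ — more carefully, keep $j_{i_R}$ near $J/(2w_{i_R})$ so that $2^{\gamma j_{i_R}w_{i_R}}\sim 2^{\gamma J/2}$ is still a fixed power of the target, or simply note that the single term with ${\bs j}^T{\bs w}$ maximal already gives $\gtrsim 2^{\gamma(J-\|{\bs w}\|_1)}$ and then one recovers the $J^{R-1}$ from counting near-maximal terms.

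**Main obstacle.** The only genuinely delicate point is the bookkeeping at the boundary of the simplex and the lattice-point counting: one must verify that the dominant terms of the geometric sum really do lie on the outer face ${\bs j}^T{\bs w}\approx J$ (so that the count of significant terms is $\sim J^{R-1}$ rather than $\sim J^R$), and that rounding effects from the non-unit lattice spacings $w_i$ and from the boundary cases $j_i\in\{0,1\}$ in the dimension count $\dim\mathcal{W}_{j_i}^{(i)}\sim 2^{j_id_i}$ do not spoil the constants. All of this is routine but needs to be stated with enough care that the implied constants genuinely do not depend on $J$; everything else is geometric-series arithmetic.
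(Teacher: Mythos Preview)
Your approach is correct and is precisely the argument the paper defers to by citing \cite[Theorem~4.1]{GH2}: the paper's own proof is a one-line invocation of that reference together with the observation $\dim\mathcal{H}_{j_i}^{(i)}\sim 2^{j_id_i}$, and what you have sketched is the standard sparse-grid counting contained in that cited theorem. One small cleanup: your first lower-bound attempt (the slab ${\bs j}^T{\bs w}\in[J/2,J]$) yields $\sim J^{R}$ points each of size only $\gtrsim 2^{\gamma J/2}$ and hence does not give the sharp bound---stick with your ``near-maximal'' correction and count lattice points in a thin shell ${\bs j}^T{\bs w}\in[J-C,J]$ of fixed width, which gives $\sim J^{R-1}$ terms each $\sim 2^{\gamma J}$.
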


\begin{proof}
In view of $\dim\mathcal{H}_{j_i}^{(i)} = 2^{j_i d_i}$ for all
$i=1,2,\ldots,m$, the assertion follows by nearly verbatim 
rewriting the proof of \cite[Theorem~4.1]{GH2}.
\end{proof}

As shown in \cite{GH2}, the combination of Theorems
\ref{thm:accuracy} and \ref{thm:complexity} yields 
the following estimate on the cost-complexity of the 
approximation in the sparse tensor product space
$\widehat{\bs{\mathcal{H}}}_J^{\bs w}$:

\begin{theorem}[Cost-complexity rate]\label{thm:cost complexity}
Let ${\bs 0}\le{\bs t}<{\bs s}<{\bs t}'\le 2{\bs s}$ and 
$f\in {\bs H}^{{\bs t}'}(\bs\Omega)$. Furthermore, denote 
by $N \isdef\dim\widehat{\bs{\mathcal{H}}}_J^{\bs w}$ the number 
of degrees of freedom in the sparse tensor product space 
$\widehat{\bs{\mathcal{H}}}_J^{\bs w}$ and set
\[
  \beta \isdef \frac{\min\{(t_1'-t_1)/w_1,\ldots,(t_m'-t_m)/w_m\}}
  		{\max\{d_1/w_1,\ldots,d_m/w_m\}}.
\]
Assume that the minimum in the enumerator is attained 
$P\in\Nbb$ times and the maximum in the denominator is 
attained $R\in\Nbb$ times. Then, the sparse grid kernel 
interpolant in $\widehat{\bs{\mathcal{H}}}_J^{\bs w}$
satisfies the error estimate
\begin{equation}	\label{eq:cost rate}
\big\|f-\widehat{\bs P}_J^{\bs w}f\big\|_{{\bs H}^{\bs t}(\bs\Omega)}
  	\lesssim N^{-\beta}(\log N)^{(P-1)+\beta(R-1)}
		\|f\|_{{\bs H}^{{\bs t}'}(\bs\Omega)}
\end{equation}
in terms of the degrees of freedom $N$.
\end{theorem}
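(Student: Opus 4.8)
The plan is to combine the convergence rate from Theorem~\ref{thm:accuracy} with the complexity count from Theorem~\ref{thm:complexity} by eliminating the level parameter $J$ in favour of the degrees of freedom $N$. First I would abbreviate the two relevant exponents: set $\mu \isdef \min\{(t_1'-t_1)/w_1,\ldots,(t_m'-t_m)/w_m\}$, so that Theorem~\ref{thm:accuracy} reads
\[
  \big\|(I-\widehat{\bs P}_J^{\bs w})f\big\|_{{\bs H}^{\bs t}(\bs\Omega)}
  \lesssim 2^{-J\mu}\,J^{P-1}\,\|f\|_{{\bs H}^{{\bs t}'}(\bs\Omega)},
\]
and set $\delta \isdef \max\{d_1/w_1,\ldots,d_m/w_m\}$, so that Theorem~\ref{thm:complexity} gives $N \sim 2^{J\delta}\,J^{R-1}$. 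By construction $\beta = \mu/\delta$, so that $2^{-J\mu} = (2^{J\delta})^{-\beta}$.

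The next step is to invert the relation $N \sim 2^{J\delta} J^{R-1}$ to bound $2^{J\delta}$ and $J$ in terms of $N$. Taking logarithms yields $\log N \sim J\delta + (R-1)\log J$, hence $J\delta \le \log N$ up to lower-order terms, and more precisely $J \sim (\log N)/\delta$ asymptotically, so that $\log J \sim \log\log N = O(\log\log N)$. Consequently $2^{J\delta} \gtrsim N/(\log N)^{R-1}$ (again up to constants), which gives
\[
  2^{-J\mu} = (2^{J\delta})^{-\beta} \lesssim \big(N/(\log N)^{R-1}\big)^{-\beta}
  = N^{-\beta}(\log N)^{\beta(R-1)}.
\]
For the logarithmic prefactor from the convergence estimate, $J^{P-1} \sim ((\log N)/\delta)^{P-1} \lesssim (\log N)^{P-1}$. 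Multiplying the two contributions produces the claimed bound $N^{-\beta}(\log N)^{(P-1)+\beta(R-1)}\|f\|_{{\bs H}^{{\bs t}'}(\bs\Omega)}$.

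The main obstacle, and the point that needs genuine care rather than routine manipulation, is the asymptotic inversion of $N \sim 2^{J\delta}J^{R-1}$: one must justify that the sub-exponential factor $J^{R-1}$ indeed only contributes a $(\log N)^{R-1}$ correction and does not distort the leading power of $N$. This is a standard argument — one shows $J = \frac{\log N}{\delta}\big(1 + o(1)\big)$ by a bootstrap (first $J \lesssim \log N$, then feed this back to control $\log J$), and then substitutes — but it is the only step where the estimate is not a one-line consequence of the preceding theorems. Everything else is bookkeeping with the definitions of $\mu$, $\delta$, $\beta$, $P$, and $R$; alternatively, as the text already indicates, the entire statement can be quoted from \cite{GH2}, so I would remark that the proof is identical to the one given there after substituting the present values of the exponents.
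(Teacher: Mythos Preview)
Your proposal is correct and takes essentially the same approach as the paper: the paper does not give an explicit proof but simply states that the result follows by combining Theorems~\ref{thm:accuracy} and~\ref{thm:complexity} as in \cite{GH2}, which is precisely what you do. Your write-up in fact supplies more detail than the paper itself, including the one genuinely nontrivial step (the asymptotic inversion of $N\sim 2^{J\delta}J^{R-1}$), and your closing remark about quoting \cite{GH2} mirrors the paper's own justification.
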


It has been shown in \cite[Lemma 5.1]{GH2} that there holds
\[
  \beta\le\beta^\star\isdef \min\bigg\{\frac{t_1'-t_1}{d_1},\ldots,\frac{t_m'-t_m}{d_m}\bigg\}
\]
for all $\bs w>{\bf 0}$. Moreover, if the above minimum is
attained for the index $1\le\ell\le m$, then we achieve the
maximum rate $\beta = \beta^\star$ in \eqref{eq:cost rate}
for all $\bs w>{\bf 0}$ such that
\begin{equation}	\label{eq:inequality}
  \frac{t_\ell'-t_\ell}{t_i'-t_i}\le\frac{w_\ell}{w_i}\le\frac{d_\ell}{d_i}
		\quad\text{for all $i=1,2,\ldots,m$}.
\end{equation}
Natural choices of the parameter ${\bs w}>{\bf 0}$ are:
\renewcommand{\theenumi}{\emph{\roman{enumi}.}}
\begin{enumerate}
\item
To equilibrate the accuracy in the extremal univariate spaces
$\Hcal_{J/w_i}^{(i)}$, $i=1,2,\ldots,m$, we obtain the condition
\[
  2^{-J(t_1'-t_1)/w_1} = 2^{-J(t_2'-t_2)/w_2} 
  	= \cdots = 2^{-J(t_m'-t_m)/w_m}. 
\]
This means that we have to choose $\widetilde{w}_i
\isdef t_i'-t_i$ for all $i=1,2,\ldots,m$ and then rescale
${\bs w} \isdef \widetilde{\bs w}/\|\widetilde{\bs w}\|_{\infty}$.
This choice corresponds to the lower bound in \eqref{eq:inequality}.

\item
To equilibrate the number of degrees of freedom in the extremal 
univariate spaces $\Hcal_{J/w_i}^{(i)}$, $i=1,2,\ldots,m$, we obtain 
the condition
\[
  2^{Jd_1/w_1} = 2^{Jd_2/w_2} = \cdots = 2^{Jd_m/w_m}.
\]
This condition is satisfied if $\widetilde{w}_i\isdef d_i$ 
for all $i=1,2,\ldots,m$ and then setting ${\bs w} 
\isdef \widetilde{\bs w}/\|\widetilde{\bs w}\|_{\infty}$.
This choice yields the upper bound in \eqref{eq:inequality}.

\item
Following the idea of an {\em equilibrated cost-benefit rate} 
(see \cite{BG}), we get the condition
\[
  2^{j_1(d_1+t_1'-t_1)}\cdot 2^{j_2(d_2+t_2'-t_2)}\cdots 2^{j_m(d_m+t_m'-t_m)} 
    = 2^{J\cdot const.}
\]
for all ${\bs j}^T{\bs w} = J$. For $const. = 1$, 
we find $\widetilde{w}_i=d_i+t_i'-t_i$ for all 
$i=1,2,\ldots,m$. By setting again ${\bs w} \isdef 
\widetilde{\bs w}/\|\widetilde{\bs w}\|_{\infty}$
we derive a weight ${\bs w}$ which is between the
lower and upper bound in \eqref{eq:inequality} provided
that these differ from each other.
\end{enumerate}
We like to emphasize that the equilibration of the 
degrees of freedom is the only choice which gives always
the highest rate $\beta^\star$ (except for polylogarithmic 
factors), independent of the kernel under consideration 
or the particular smoothness of the function to be 
approximated. We refer the reader to \cite{GH2} for
a more detailed discussion.

\subsection{Comparison of sampling rates}
\label{sct:comparison}
We now want to put our result into perspective. In the 
regular sparse grid case on the unit $m$-cube $\bs\Omega 
= [0,1]^m$ and a product kernel that belongs to an RKHS 
being equivalent to $H^s([0,1])$, i.e.
\[
d_1 = d_2 = \cdots = d_m = 1, \quad s_1 = s_2 = \cdots = s_m = s,
\]
the upper and lower bound coincide and the only 
optimal weight is 
\[
  w_1 = w_2 = \cdots = w_m = 1. 
\]
It is well known that the standard Smolyak construction
without exploiting orthogonality gives
\[
\big\|f-f_N^{\text{Smolyak}}\big\|_{L^2(\bs\Omega)}
  	\lesssim N^{-s}(\log N)^{(s+1)(m-1)}
		\|f\|_{{\bs H}^{{\bs s}}(\bs\Omega)},
\]
see \cite{Smolyak}. But we can now exploit the orthogonality 
with respect to the RKHS in the error estimate as outlined 
in Remark \ref{rem:logs}. Hence, \eqref{eq:convergence rate} 
has only the logarithmic power $(P-1)/2$ instead of $P-1$ 
for ${\bf t} = {\bf 0}$ and ${\bf t}' = {\bf s}$. Thus, since 
$P = R = m$ and $\beta=s$, the respective cost-complexity 
rate for a function $f\in {\bs H}^{\bs s}(\bs\Omega)$ is
\begin{equation}\label{eq:our_rate}
\big\|f-\widehat{\bs P}_J^{\bs w}f\big\|_{L^2(\bs\Omega)}
  	\lesssim N^{-s}(\log N)^{(s+1/2)(m-1)}
		\|f\|_{{\bs H}^{{\bs s}}(\bs\Omega)}.
\end{equation}
Note at this point that it is known from \cite{Krieg} 
that there exists a set of $N$ points such that the 
best possible sampling rate would be given by
\[
 \big\|f-f_N^{\text{best}}\big\|_{L^2(\bs\Omega)}
  	\lesssim N^{-s}(\log N)^{s(m-1)}
		\|f\|_{{\bs H}^{{\bs s}}(\bs\Omega)}.
\]
This approach is however not constructive and such optimal 
point sets are not yet computable. The currently 
best point sets which are constructable provide the rate
\begin{equation}\label{eq:ullrich}
 \big\|f-f_N^{\text{constructive}}\big\|_{L^2(\bs\Omega)}
  	\lesssim N^{-s}(\log N)^{s(m-1)+1/2}
		\|f\|_{{\bs H}^{{\bs s}}(\bs\Omega)}.
\end{equation}
compare \cite{Ullrich}. This rate can be seen from (1.8) in \cite{Ullrich} 
and the linear widths for Sobolev spaces of bounded mixed derivatives 
${\bf H}^{\bf s}(\bs\Omega)$ in \cite[p.~46]{Dung1}. We should emphasize
that such point sets have to be computed in an offline phase that has
runtime $\mathcal{O}(N^3)$.

The cost-complexity rate \eqref{eq:ullrich} is the same as for 
our sparse grid point sets in \eqref{eq:our_rate} for the case 
$m=2$ and is indeed better for $m>2$ by an additive factor $(m-2)/2$ 
in the exponent of the logarithmic term. However, the huge 
practical advantage of sparse grid points over more general point 
sets is that the point distributions are structured which can be 
exploited to speed-up computations considerably. Moreover, the 
parallelization of the implementation based on the sparse grid
combination technique is straightforward.

\subsection{Sparse grid combination technique}
Due to the Galerkin orthogonality, it is easy to see that 
the detail projections satisfy
\[
  (\bs Q_{\bs j}u,\bs Q_{{\bs j}'}v)_{\bs{\mathcal{H}}}
    = 0 \quad\text{for ${\bs j}\not= {\bs j}'$ and any $u,v\in\bs{\mathcal{H}}$}.
\]
Therefore, the subspaces $\bs{\mathcal{W}}_{\bs j}$ and 
$\bs{\mathcal{W}}_{{\bs j}'}$ are $\bs{\mathcal{H}}$-perpendicular. 
Thus, since the kernel under consideration is of product type, 
the theory of \cite{HPS} tells us that we can compute the kernel 
interpolant in the sparse tensor product space 
$\widehat{\bs{\mathcal{H}}}_J^{\bs w}$ by 
means of the combination technique. 

With this in mind, we define the tensorized version of the
orthogonal projections \eqref{eq:orthProj} given by
\[
 {\bs P}_{\bs j}\colon\bs{\mathcal{H}}\to\bs{\mathcal{H}}_{\bs j},\quad
 {\bs P}_{\bs j} \isdef  P_{j_1}^{(1)}\otimes\cdots\otimes P_{j_m}^{(m)},
\]
and note that there holds the identity
\[
 {\bs P}_{\bs j} = \sum_{{\bs\ell}\le{\bs j}} {\bs Q}_{\bs\ell}.
\]
Moreover, in accordance with \cite{D1,D2,GSZ,HHPS,Smolyak}, 
we introduce the (weighted) \emph{combination technique index set}
\begin{equation}\label{eq:anisoset}
\mathcal{J}_J^{\bs w}\isdef \big\{\bs j\in\mathbb{N}_0^m:
J-|{\bs w}|<\bs {\bs j}^T{\bs w}\le J\big\}.
\end{equation}
With these definitions set at hand, one has the identity
\begin{equation}\label{eq:combiformel}
\widehat{\bs P}_J^{\bs w} = \sum_{{\bs j}\in\mathcal{J}_J^{\bs w}}
c_{\bs j}^{\bs w} {\bs P}_{\bs j},
\quad\text{where }
c_{\bs j}^{\bs w}\isdef \sum_{\genfrac{}{}{0pt}{}{{\bs j}'\in\{0,1\}^m}
{({\bs j}+{\bs j}')^T{\bs w}\le J}}(-1)^{|{\bs j}'|}.
\end{equation}
Hence, the sought sparse grid
kernel interpolant $\widehat{u}_J^{\bs w}
= \widehat{\bs P}_J^{\bs w}f\in\widehat{\bs{\mathcal{H}}}_J^{\bs w}$ 
is composed by the tensor product
kernel interpolants $u_{\bs j} = {\bs P}_{\bs j} f$ 
from different full tensor product spaces $\bs{\mathcal{H}}_{\bs j}$. 
Each of these tensor product kernel interpolants $u_{\bs j}$ 
can now be computed in accordance 
with Subsection~\ref{subsec:FullTensorProduct}.

\section{Implementation}\label{sct:implementation}
\subsection{Construction of nested point sets}
\label{sct:coarsening}
In this section, we comment on our implementation of the 
sparse grid kernel interpolation. We first describe how
we generate the multilevel sequence \eqref{eq:multiscale}
from a given set of quasi-uniform data sites $X\subset\Omega$.
Then, since each particular term in the sparse grid 
combination technique amounts to the solution of a dense 
linear system of equations which is of tensor product 
structure, we apply tensorization methods. Moreover, 
we use a fast method for nonlocal operators for each
subproblem that is associated to direction $i$, where 
$i=1,\ldots,m$. As we will demonstrate by numerical experiments, 
we altogether obtain a very efficient method to compute 
the sparse grid kernel interpolant.

\begin{figure}[htb]
\begin{center}
\includegraphics[scale = 0.072,clip,trim=380 230 380 230]{./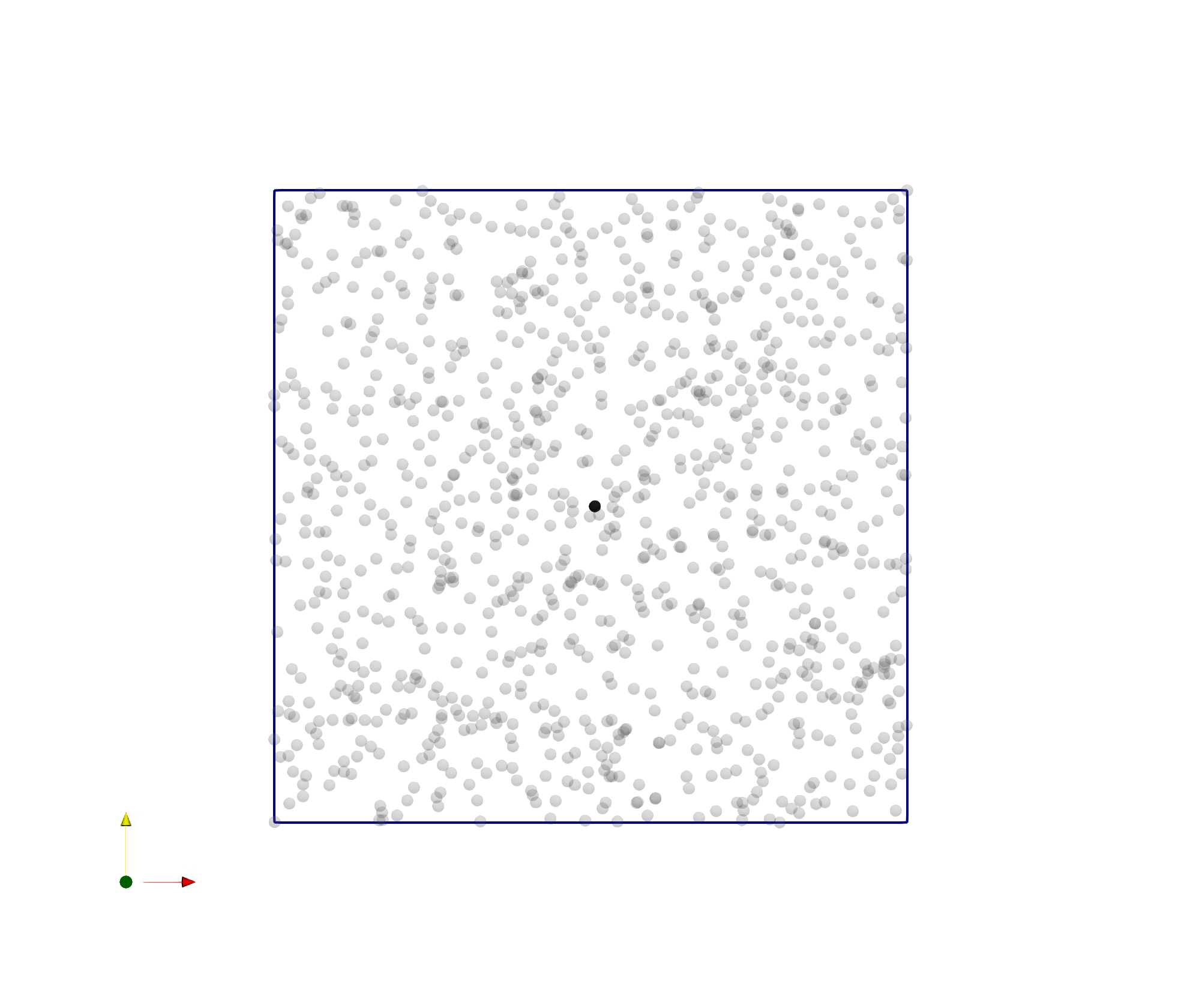}\quad
\includegraphics[scale = 0.072,clip,trim=380 230 380 230]{./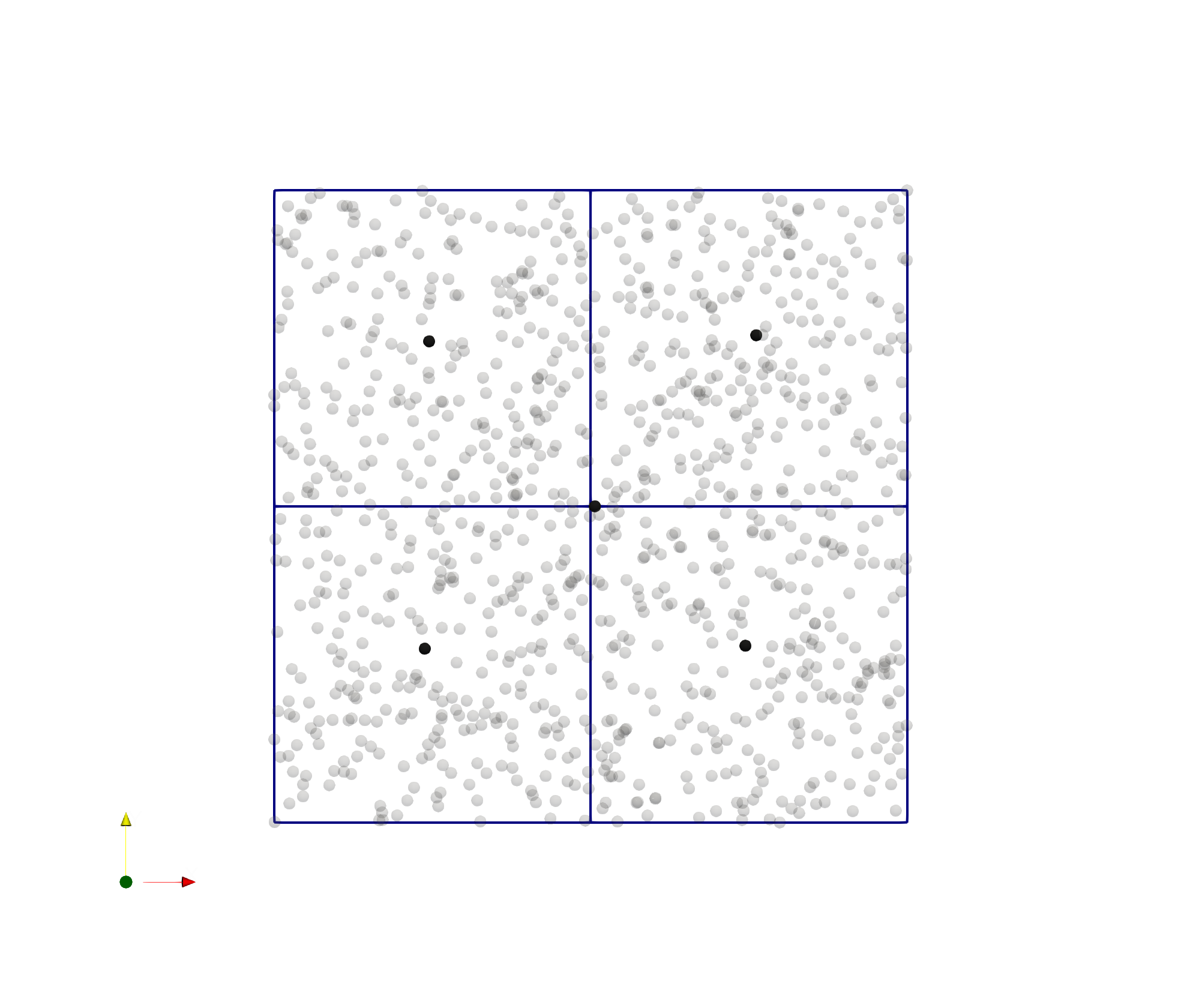}\quad
\includegraphics[scale = 0.072,clip,trim=380 230 380 230]{./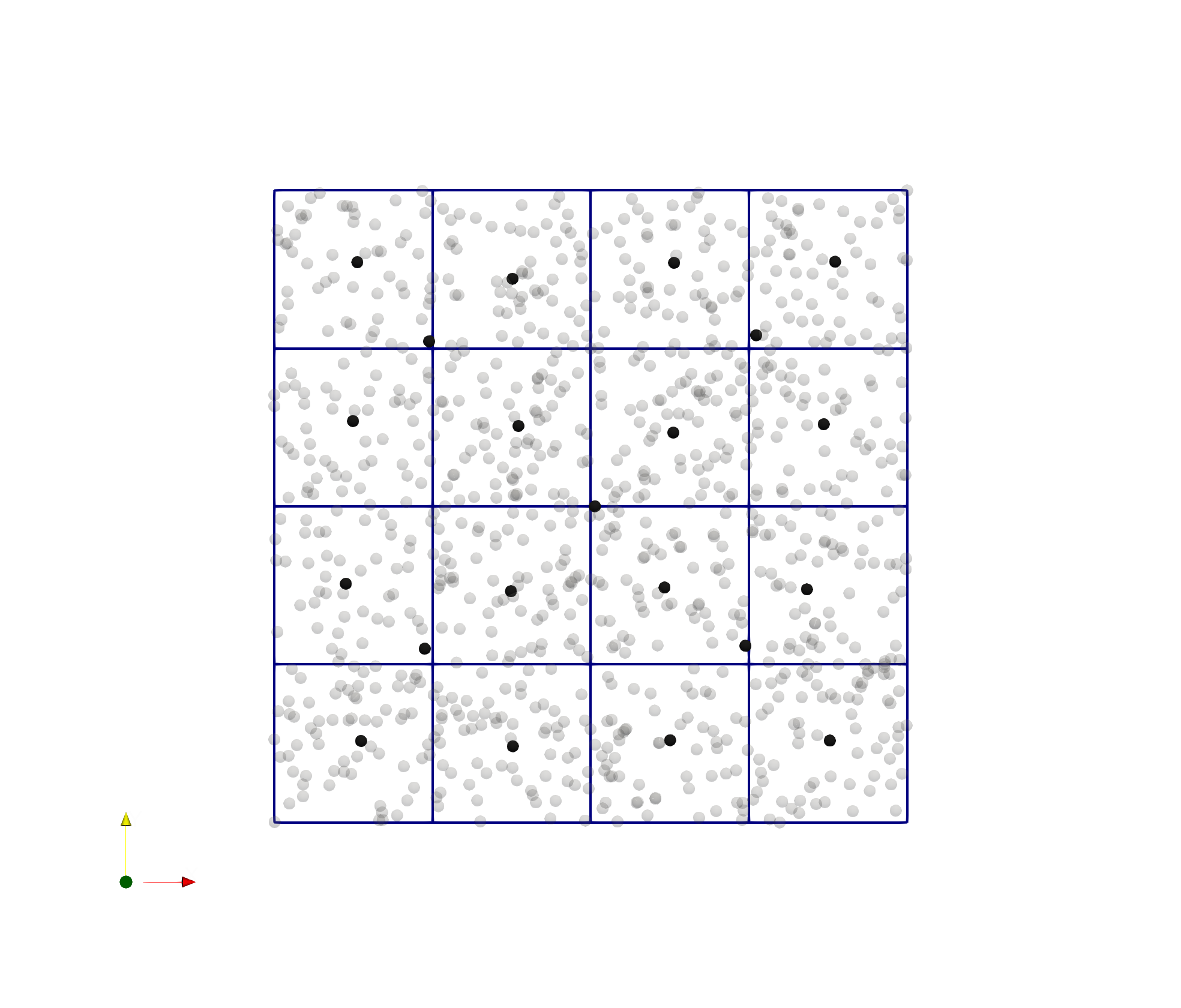}\quad
\includegraphics[scale = 0.072,clip,trim=380 230 380 230]{./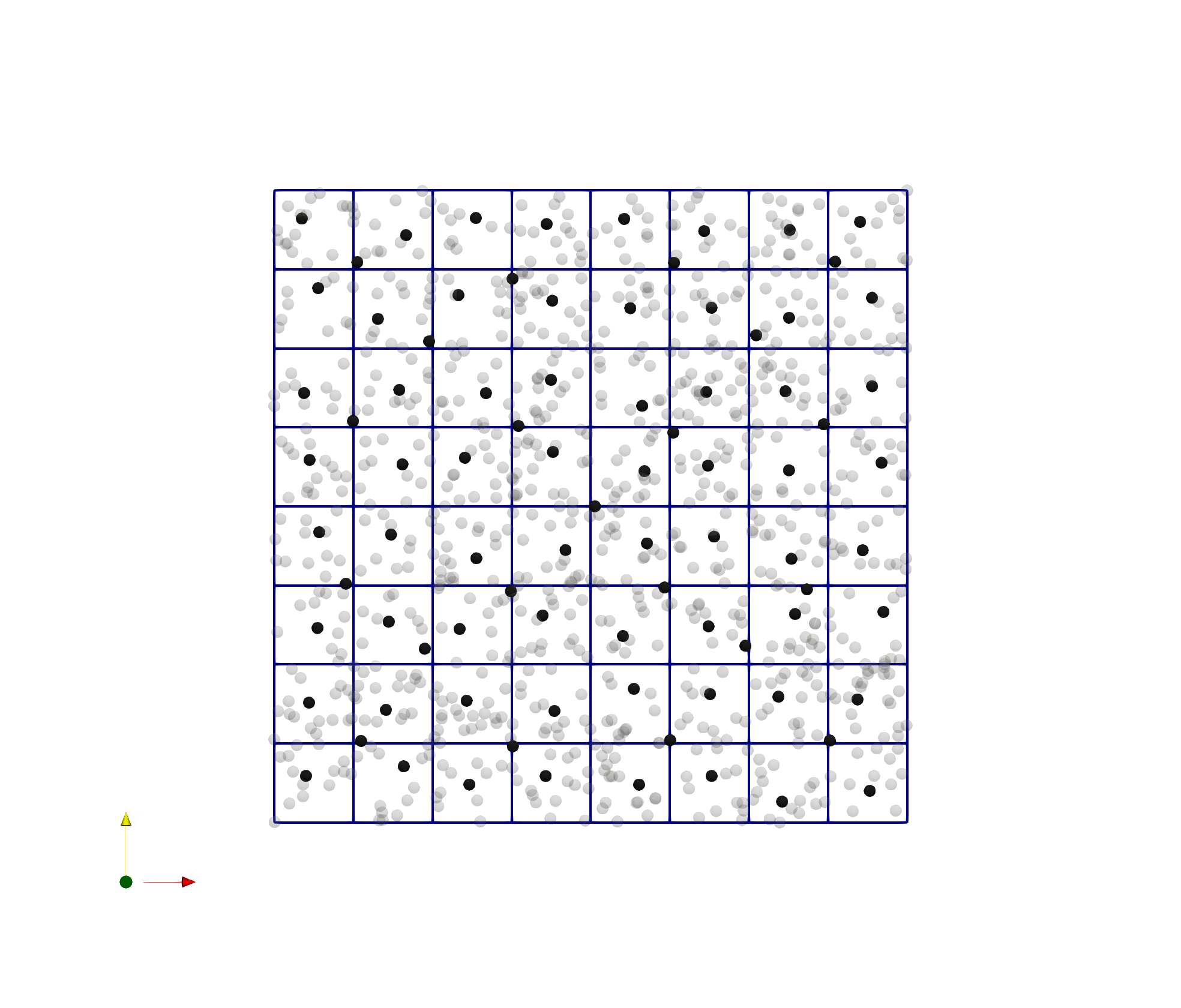}
\end{center}
\caption{\label{fig:subsample}Visualization of the subsampling 
procedure starting from a set of 1000 uniformly chosen random 
points on $[0,1]^2$.}
\end{figure}

Different sophisticated algorithms for 
the construction of nested subsets from a given set of data 
sites have been proposed in the literature, see, e.g., 
\cite{DeF89,FI96,SHD11}. Nonetheless, for our our purposes, 
the simple algorithm described below is sufficient.
To construct a multilevel sequence \eqref{eq:multiscale}
for a given set of quasi-uniform data sites $X$
and a given maximum level \(J\in\Nbb\),
we assume without loss of generality that
\(\Omega\subset[0,1]^d\). Otherwise, \(\Omega\) can be mapped
into \([0,1]^d\) by an affine transform and the subsequent
procedure has to be adapted accordingly.

We apply the following top-down 
algorithm: For each level $j=0,1,\ldots,J$, we subdivide 
$[0,1]^d$ equidistantly into $2^{jd}$ cuboids of edge length 
$2^{-j}$. To determine the point set \(X_j\), we start from
\(X_{j-1}\) and add points that are not already contained in
this set. To that end, from all points that are in a given 
cuboid, the point which is closest to the midpoint of the 
cuboid is chosen (in case of nonuniqueness, one randomly 
chooses one of the closest points). Thus, if each cuboid's 
intersection with the region \(\Omega\) contains at least one 
point, a fill distance \(h_{X_j,\Omega}\sim 2^{-j}\) is guaranteed. 
We remark that this is already achieved by taking any 
point within each cuboid. But choosing the point closest 
to the midpoint has the advantage of improving the separation 
distance. A visualization of the subsampling procedure for 
\(j=0,1,2,3\), starting from a set of 1000 uniformly 
chosen random points on $[0,1]^2$, is given in 
Figure~\ref{fig:subsample}.

\begin{algorithm}[htb]
\caption{\label{alg:UnifSubs}Uniform Subsample}
\begin{algorithmic}[1]
\Function{uniformSubsample}{$\Ical, X, j$}
    \State $\Ical^c\gets [1,\ldots,|X|]\setminus\Ical$
    \State $\Ical_{\text{new}}\gets\emptyset$
    \ForAll{$p\in\Ical^c$}
        \State${\bs m} \gets 2^{-j}(\lfloor2^{j}{\bs x}_p\rfloor+{\bf 0.5})$
        \If{$p_{\bs c}\in \Ical_{\text{new}}$}
        \If{$\|{\bs x}_{p_{\bs c}}-{\bs m}\|>\|{\bs x}_{p}-{\bs m}\|$}
                \State $p_{\bs c}\gets p$
                \EndIf
        \Else
        \State $p_{\bs c}\gets p$
        \State $\Ical_{\text{new}}\gets\Ical_{\text{new}}\cup\{p_{\bs c}\}$
        \EndIf
          \EndFor
    \State \Return $\Ical\cup\Ical_{\text{new}}$
\EndFunction
\end{algorithmic}
\end{algorithm}

An implementation can be found in Algorithm~\ref{alg:UnifSubs}.
It updates a given index set \(\Ical\) by selecting associated 
points as described above which are not already in \(\Ical\). 
Starting from \(\Ical=\emptyset\) and iterating then for 
\(j=0,\ldots,J\) results in the desired multilevel hierarchy. 
The cost of the algorithm for each level \(j\) is linear in the 
cardinality of $X$. 

\subsection{Computing the sparse grid kernel interpolant}
For the computation of the sparse grid kernel interpolant, 
we rely on the combination technique \eqref{eq:anisoset}. For 
each multi-index \(\bs j\in\Jcal_{J}^{\bs w}\), we have to 
solve the tensor product linear system 
\begin{equation}\label{eq:LGS}
{\bs K}_{\bs j} {\bs \alpha}_{\bs j} = {\bs f}_{\bs j}\quad\text{with}
\quad {\bs K}_{\bs j} = {\bs K}_{j_1}^{(1)}\otimes\cdots\otimes {\bs K}_{j_m}^{(m)},
\end{equation}
compare~\eqref{eq:tpsystem}. 
To exploit the tensor product structure of 
the linear system \eqref{eq:LGS}, we need a suitable 
representation of the quantities \({\bs \alpha}_{\bs j}\)
and \({\bs f}_{\bs j}\) in the computer. Moreover, we need 
to be able to unfold the tensor linear system \eqref{eq:LGS} 
to conventional linear systems ${\bs K}_{j_i}^{(i)} 
{\bs \alpha}_{j_i}^{(i)} = {\bs f}_{j_i}^{(i)}$
which belong to the directions $i=1,\ldots,m$, and 
are to be solved successively. Finally, we need 
a backtransform of the resulting solutions 
${\bs \alpha}_{j,i}^{(i)}$ to their associated tensor 
representation ${\bs \alpha}_{\bs j}^{(i)}$. Such tensor 
methods have become important tools in the recent years, 
see \cite{Hackbusch} for example, and can be applied 
in our context.

\begin{algorithm}[htb]
\caption{\label{algo:ClassTensor}Class Tensor}
\begin{algorithmic}[1]
\Function{toScalarIndex}{${\bs k},{\bs n}\in\Nbb_0^m$}
\State ${\bs b}\gets $\,\Call{strides}{${\bs n}$}
    \State $p \gets 0$
    \For{$i = 1,\ldots, m$}
        \State $p\gets p+ k_ib_i$
    \EndFor
    \State \Return $p$
\EndFunction
\Statex \hrulefill
\Function{toMultiIndex}{$p\in\Nbb_0,{\bs n}\in\Nbb_0^m$}
\State ${\bs b}\gets $\,\Call{strides}{${\bs n}$}
    \State ${\bs k}\gets{\bs 0}$
    \For{$i = 1,\ldots, m$}
        \State $(k_i,p) \gets (p / b_i,p\! \mod b_i)$
    \EndFor
    \State \Return ${\bs k}$
\EndFunction
\Statex \hrulefill
\Function{matricize}{$k\in\Nbb_0,o,p\in\Nbb,{\bs n}\in\Nbb_0^m$}
\State ${\bs b}\gets $\,\Call{strides}{${\bs n}$}
    \State $z \gets 0$
    \State $r \gets p$
    \For{$i = 1,\ldots, k-1$}
        \State $s \gets b_i / n_k$
        \State $(c,r)\gets (\lfloor r/s\rfloor, r\! \mod s)$
        \State $z \gets z + cb_i$
    \EndFor
    \State $r \gets r + ob_k$
    \For{$i = k + 1,\ldots,m$}
    \State $(c,r)\gets(\lfloor r/b_i\rfloor, r\! \mod b_i)$        
        \State $z \gets z + cb_i $
    \EndFor
    \State \Return $z$
\EndFunction
\Statex \hrulefill
\Function{strides}{${\bs n}$}
    \State ${\bs b}={\bs 0}$
    \For{$i=1,\ldots, m$}
        \State $b_i \gets \prod_{o=i+1}^{m} n_o$
    \EndFor
    \State \Return ${\bs b}$
\EndFunction
\end{algorithmic}
\end{algorithm}

The class \textsc{Tensor} in Algorithm~\ref{algo:ClassTensor} 
provides an implementation of an elementwise serialization of a given 
tensor in main memory by means of the method \textsc{toScalarIndex}. 
Given a multi-index \({\bs k}\in\{0,n_1\}\times\cdots\times
\{0,n_m\}\), the function assigns a unique linear index 
\(p=\text{\textsc{toScalarIndex}}
({\bs k},{\bs n})\in\{0,\ldots,\prod_{i=1}^mn_i\}.\)
This is achieved by a mixed radix representation, with the 
basis generated by the method \textsc{strides}. 
The corresponding inverse mapping from a scalar index
to a multi-index is given by \textsc{toMultiIndex}, 
which amounts to the Euclidean division algorithm.

Now, mapping
each entry of a tensor \({\bs\alpha}\in\Rbb^{\bs n}\) by
\textsc{toScalarIndex}
yields the serialization
\[
\text{\textsc{Tensor}}({\bs n})\text{.\textsc{serialize}}
({\bs\alpha})\in\Rbb^{\prod_{i=1}^m n_i}.
\]
To efficiently solve the linear system \eqref{eq:LGS}, we 
require all possible matricizations \({\bs M}\in
\Rbb^{n_i\times\prod_{o\neq i}n_o}\) of \({\bs\alpha}_{\bs j}\).
The elementwise matricization is again based on the Euclidean 
division algorithm and a possible implementation is found 
in \textsc{matricize}. With a slight abuse of notation,
we refer to the entire matricization as
\[
{\bs M}=\text{\textsc{Tensor}}({\bs n})\text{.\textsc{matricize}}
({\bs\alpha},i)\in\Rbb^{n_i\times\prod_{o\neq i}n_o}.
\]
The complete computation of the sparse grid kernel 
interpolant is presented in Algorithm~\ref{algo:SGinterpolant}.

\subsection{Fast solution of the linear system of equations}\label{sec:Samplets}
It remains to provide an efficient solver for each of the 
kernel matrices \({\bs K}_{j_i}^{(i)}\), \(i=1,\ldots,m\) 
and each multi-index \({\bs j}\in \Jcal_{J}^{\bs w}\),
occurring in line 7 of Algorithm~\ref{algo:SGinterpolant}.
For this, we compute a sparse approximation to \({\bs K}_{j_i}^{(i)}\) 
by employing the samplet-based kernel matrix compression, see 
\cite{HM1,HMSS}, in combination with the sparse direct solver 
\texttt{CHOLMOD}, see \cite{CHOLMOD}. Of course, 
other approaches would be also possible here such as low-rank 
methods \cite{RR07}, adaptive low-rank methods like the multipole 
method or $\mathcal{H}$-matrices \cite{Multipol,H-Matrices}, 
fast Fourier techniques \cite{PS03}, and kernel slicing \cite{Her24}.
We decided for the samplet matrix compression as it is known to be
extremely memory efficient and a direct solver is available.
We give a brief summary of is method and refer the reader 
to \cite{HM1} for details. 

Samplets are a multiresolution basis of localized discrete signed
measures with vanishing moments, which have a natural embedding into 
RKHS by means of the Riesz isometry. Let \({\bs K}_{j_i}^{\Sigma,(i)}\)
denote the kernel matrix \({\bs K}_{j_i}^{(i)}\) in samplet coordinates
and let $\eta >0$ be a fixed parameter. Then, there holds,
\begin{equation}\label{eq:compression}
 {\Big\|{\bs K}_{j_i}^{\Sigma,(i)}-\widetilde{\bs K}_{j_i}^{\Sigma,(i)}\Big\|_F}
   \leq C(c\eta)^{-2(q+1)}{\Big\|{\bs K}_{j_i}^{\Sigma,(i)}\Big\|_F},
\end{equation}
where \(q+1\) is the number of vanishing moments, and 
\(C,c>0\) are constants. The matrix 
\(\widetilde{\bs K}_{j_i}^{\Sigma,(i)}\) is obtained from 
\({\bs K}_{j_i}^{\Sigma,(i)}\) by setting all entries to zero,
whose associated samplets have supports \(\tau,\tau'\) that satisfy 
\[
 \operatorname{dist}(\tau,\tau')\ge\eta\max\{\operatorname{diam}(\tau),
 \operatorname{diam}(\tau')\},\quad\eta>0.
\]
The compressed matrix has $\mathcal{O}(N\log N)$ remaining entries. The 
error estimate \eqref{eq:compression} is valid for \emph{asymptotically 
smooth kernels}, especially for the Mat\'ern class of kernels. For such 
kernels, the samplet compressed kernel matrices can be computed efficiently 
with loglinear cost-complexity by means of a multipole method, 
see \cite{Multipol}. In contrast to this early work, we follow 
\cite{Giebermann} and use $\mathcal{H}^2$-matrices and interpolation 
of the kernel under consideration. We refer the reader to \cite{HMQ25} 
for the description of the implementation of the particular multipole 
method we use. To further reduce the number of entries, an a-posteriori 
thresholding of small entries in \(\widetilde{\bs K}_{j_i}^{\Sigma,(i)}\) 
may be performed once the samplet compressed matrix has been assembled.
Figure~\ref{fig:KernelArithmetics} illustrates the samplet compressed 
matrix $\widetilde{\bs K}_{j_i}^{\Sigma,(i)}$, its nested dissection 
reordering (see \cite{Geo73} for details), and the resulting Cholesky 
factor in case of the exponential kernel on the unit square for 
$300\,000$ uniform random data sites.

\begin{figure}[htb]
\begin{center}
\begin{tikzpicture}
\draw(0,0)node{\includegraphics[scale=0.35,clip,trim= 0 0 0 12.25,frame]{
  ./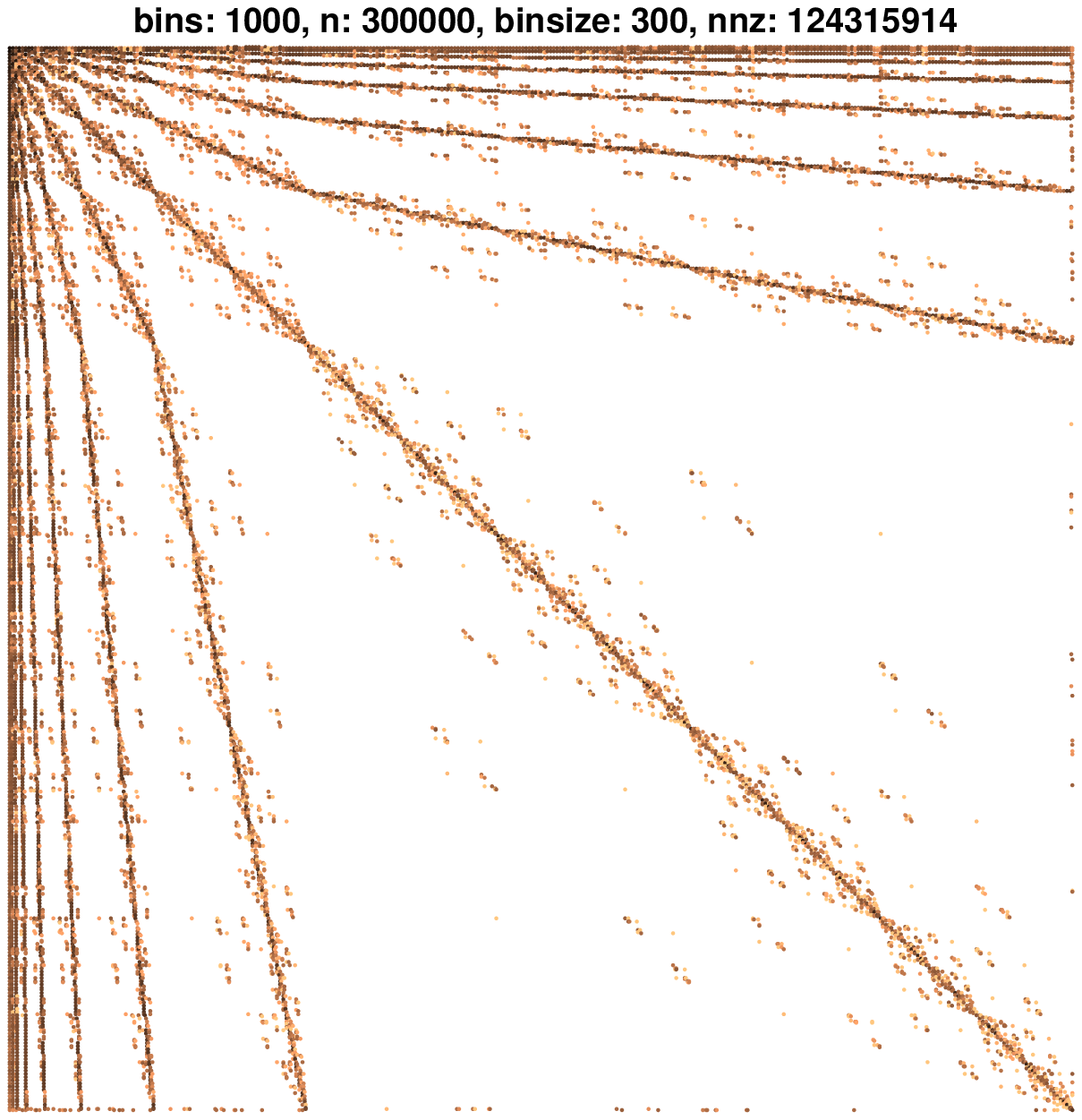}};
\draw(5,0)node{\includegraphics[scale=0.35,clip,trim= 0 0 0 12.25,frame]{
  ./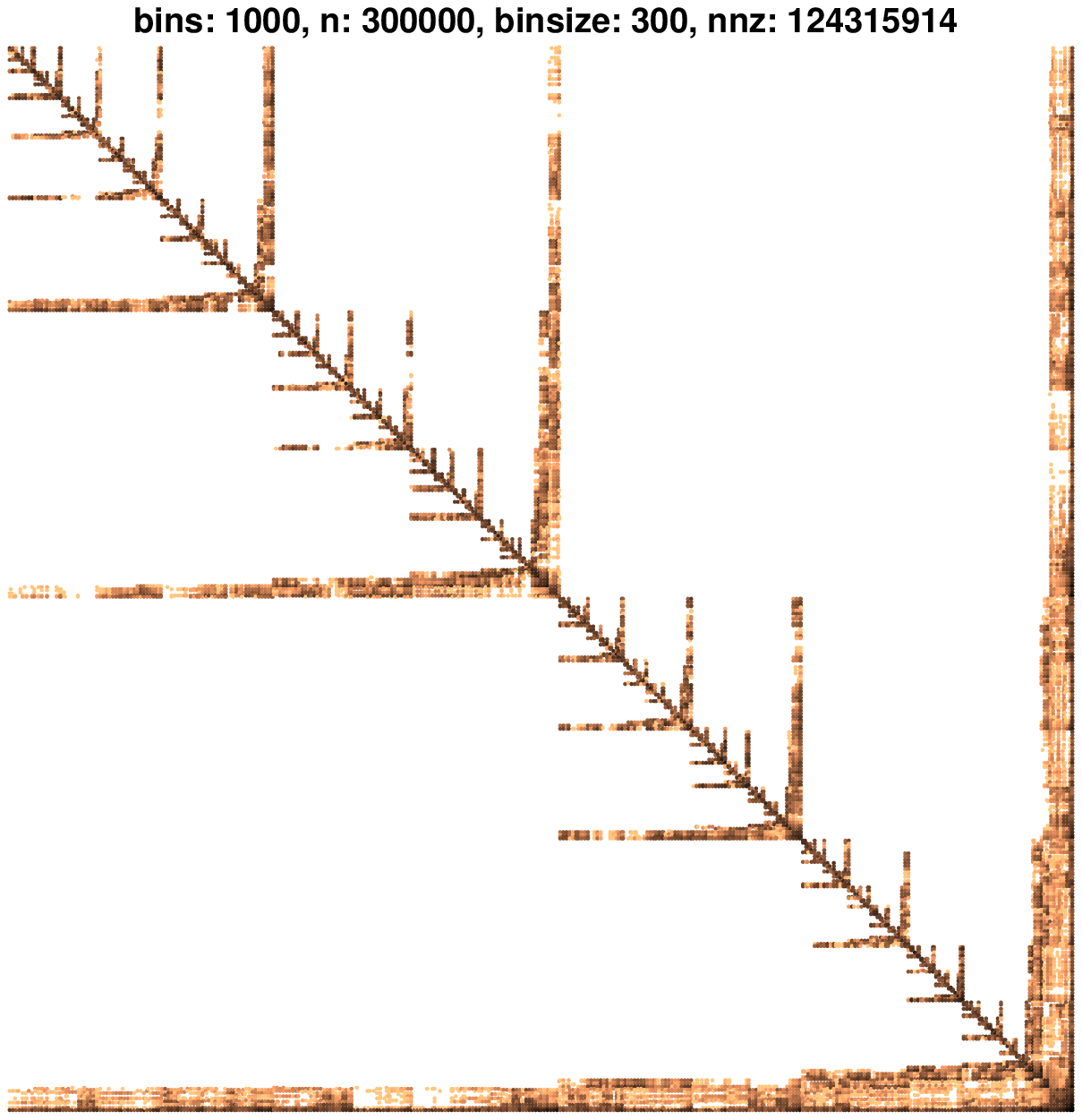}};
\draw(10,0)node{\includegraphics[scale=0.35,clip,trim= 0 0 0 12.25,frame]{
  ./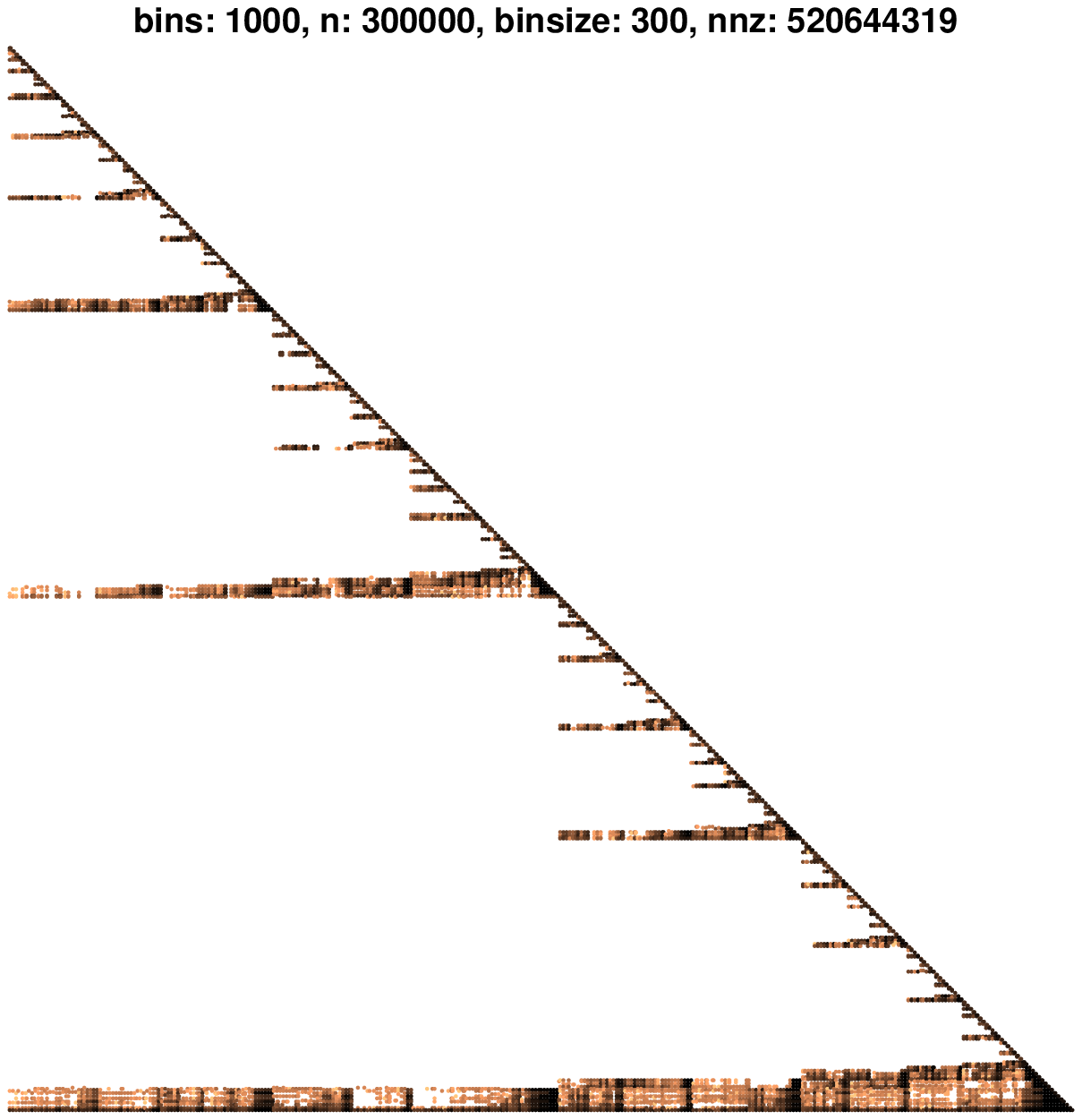}};
\end{tikzpicture}
\caption{\label{fig:KernelArithmetics}Sparsity patterns of the
samplet compressed exponential kernel on the unit square (left)
for $300\,000$ data sites, the nested dissection reordering (middle),
and the Cholesky factor (right). Each dot represents a matrix block 
of size \(300\times 300\). The number of entries per block is color 
coded, where lighter blocks have less entries.
}
\end{center}
\end{figure}

The sparse direct solver mitigates to some extent the 
computational cost for the numerical solution caused by 
the ill-conditioning of the kernel matrices for increasing 
numbers of points. Note here that the increasing condition 
number requires a corresponding increase in compression error 
accuracy to maintain a fixed overall consistency error. We 
refer to \cite{AKMW} for a detailed discussion on this matter. 

\begin{algorithm}[htb]
\caption{\label{algo:SGinterpolant}Compute Sparse Grid Kernel Interpolant}
\begin{algorithmic}[1]
\Function{Compute}{$[{\bs K}_{\bs j}]_{\bs j\in\Jcal_{J}^{\bs w}}$,
$[{\bs f}_{\bs j}]_{\bs j\in\Jcal_{J}^{\bs w}}$}
    \ForAll{${\bs j}\in\Jcal_{J}^{\bs w}$}
        \State ${\bs n}_{\bs j} \gets \big[\big|X^{(1)}_{j_1}\big|,\ldots,\big|X^{(m)}_{j_m}\big|\big]^T$
       \State ${\bs\alpha}_{\bs j}\gets\Call{Tensor(${\bs n}_{\bs j}$).serialize}{{\bs f}_{\bs j}}$

        \For{$i = 1,\ldots,m$}
        \State ${\bs M}\gets\Call{Tensor(${\bs n}_{\bs j}$).matricize}{{\bs\alpha}_{\bs j},i}$
               \State ${\bs M}\gets \big({\bs K}_{j_i}^{(i)}\big)^{-1}{\bs M}$
 \State ${\bs\alpha}_{\bs j}\gets\Call{Tensor(${\bs n}_{\bs j}$).serialize}{\bs M}$
        \EndFor
        \EndFor
    \State\Return $[{\bs \alpha}_{\bs j}]_{\bs j\in\Jcal_{J}^{\bs w}}$
\EndFunction
\end{algorithmic}
\end{algorithm}

\subsection{Evaluation of the sparse grid kernel interpolant}
Given sets of evaluation points $X^{(1)}_{\text{eval}}\subset
\Omega_1,\ldots,X^{(m)}_{\text{eval}}\subset\Omega_m$, the evaluation 
of the sparse grid kernel interpolant on the tensor product grid
$\bigtimes_{i=1}^m X^{(i)}_{\text{eval}}$ is similar to the 
solution of the interpolation problems in the sparse grid 
combination technique. The linear solver just needs to be 
replaced by a matrix-vector multiplication with the kernel 
matrices \({\bs K}^{(i)}_{X^{(i)}_{\text{eval}},X^{(i)}_{j_i}}\). 
The evaluation of the sparse grid interpolant is summarized in 
Algorithm~\ref{algo:evalSGinterpolant}. The matrix-vector 
multiplication therein can either be performed directly, in 
case of a relative small number of points in \(X^{(i)}_{j_i}\) 
or \(X^{(i)}_{\text{eval}}\), or can be sped up by means of 
the fast multipole method.

\begin{algorithm}[htb]
\caption{\label{algo:evalSGinterpolant}Evaluate Sparse Grid Kernel Interpolant}
\begin{algorithmic}[1]
\Procedure{Evaluate}{$[{\bs\alpha}_{\bs j}]_{\bs j\in\Jcal_{J}^{\bs w}}$,
$X^{(1)}_{\text{eval}},\ldots,X^{(m)}_{\text{eval}}$}
        \State ${\bs u}\gets{\bs 0}$
    \ForAll{${\bs j}\in\Jcal_{J}^{\bs w}$}
        \State ${\bs n}_{\bs j} \gets \big[\big|X^{(1)}_{j_1}\big|,\ldots,\big|X^{(m)}_{j_m}\big|\big]^T$ 
        \For{$i = 1,\ldots,m$}
        \State ${\bs M}\gets\Call{Tensor(${\bs n}_{\bs j}$).matricize}{{\bs\alpha}_{\bs j},i}$
               \State ${\bs M}\gets {\bs K}^{(i)}_{X^{(i)}_{\text{eval}},X^{(i)}_{j_i}}{\bs M}$
 \State ${\bs u}_{\bs j}\gets\Call{Tensor(${\bs n}_{\bs j}$).serialize}{{\bs M}}$
        \EndFor
        \State ${\bs u}\gets {\bs u} + c_{\bs j}^{\bs w}\,\Call{Tensor(${\bs n}_{\bs j}$).serialize}{{\bs u}_{\bs j}}$
        \EndFor
    \State\Return ${\bs u}$
\EndProcedure
\end{algorithmic}
\end{algorithm}

\section{Numerical results}
\label{sct:numerix}
\subsection{General setup}
In our numerical experiments, we employ the \emph{Mat\'ern kernels} 
or \emph{Sobolev splines} $\kappa_{\nu}\colon \mathbb{R}^d\times\mathbb{R}^d 
\to \mathbb{R}$, which are dependent on the \emph{smoothness parameter} 
$\nu > d/2$. They are defined by 
\begin{align}
 \kappa_{\nu}({\bf x},{\bf y})\isdef \frac{2^{1-\nu}}{\Gamma(\nu)} 
 r^{\nu - \frac{d}{2}} K_{\nu - \frac{d}{2}} (r),
 \quad  r \isdef\frac{1}{\sigma}\|{\bf x}-{\bs y}\|_2,
\end{align}
where $ \Gamma $ is the Riemannian gamma function and $ K_{\beta} $ 
is the modified Bessel function of the second kind, see \cite{MAT} 
for example. These kernels are known to be nonlocal and are hence not 
straightforward to deal with numerically since standard discretizations 
result in densely populated system matrices. Nonetheless, they are the 
reproducing kernels of the Sobolev spaces $H^{\nu+d/2}(\mathbb{R}^d)$, 
equipped with the canonical inner product that satisfies \eqref{eq:HsCSU}, 
and hence of great importance in practice. Although the Mat\'ern kernels 
cannot be expected to be also reproducing kernels of $H^{\nu+d/2}
(\Omega)$ with an inner product that satisfies \eqref{eq:HsCSU}, 
it turned out that they work in our numerical experiments provided that 
the interpolant is not evaluated too close to the boundary of \(\Omega\).

Throughout our experiments, we always interpolate the data 
generating process \(f\equiv 1\). At first glance, this may
appear like a very simple problem. However, for kernel interpolation
it is nontrivial, since the ansatz spaces \(\Hcal_X\)
under consideration do
not include polynomials. On the other hand, the function \(f\equiv 1\) is 
arbitrarily smooth and does not depend on the dimensionality,
which makes it a perfect test case. As mentioned in the previous 
section, the compression of smoother kernels poses a particular 
challenge in terms of accuracy. In particular for \(d=1\), we employ 
samplets with \(q+1=9\) vanishing moments and set the parameter for the cut-off 
criterion to \(\eta=5\), compare\ \cite{HM1}. In addition, an a-posteriori 
compression 
with threshold \(10^{-15}\) relative to the Frobenius norm of the compressed 
kernel matrix is performed. For \(d=2,3\), samplets with \(q+1=4\) vanishing 
moments and the parameter of the cut-off criterion set to \(\eta=2\) have been 
sufficient to maintain the overall consistency error. The threshold 
in the a-posteriori compression has been chosen as \(10^{-6}\), 
compare Section~\ref{sec:Samplets}. The length scale parameter 
of the kernels is set to \(\sigma=2\sqrt{d}\) in our examples.

All computations have been carried out on a compute server
with two AMD EPYC 7763 CPUs (64 cores each) with 2TB of main
memory and using up to 16 \texttt{OpenMP} threads if not stated
otherwise. The implementation
of the samplet matrix compression as well as of the sparse
grid combination technique are open source and available online
at \url{https://github.com/muchip/fmca}.

\subsection{Tensor product of the unit interval}
We first consider the situation 
\[
\Omega_1 = \Omega_2 = \cdots = \Omega_m = [0,1],
\]
i.e., the unit hypercube 
\(
\bigtimes_{i=1}^m \Omega_i = [0,1]^m
\)
and
\(
d_1 = d_2 = \cdots = d_m = 1.
\)
To this end, we use the tensor product kernel
\[
{\bs\kappa}\isdef\bigotimes_{i=1}^m\kappa,
\]
where \(\kappa\) is the Mat\'ern-$17/16$ kernel. The 
corresponding univariate RKHS is isomorphic to the 
Sobolev space \(H^{25/16}(0,1)\), that is, we have 
\(
s_1 = s_2 = \cdots = s_m = \frac{25}{16}.
\)
Hence, the expected univariate convergence rate with respect 
to $L^2(0,1)$ is \(25/8=3.125\) provided that the given data are 
smooth. Especially, the upper and lower bound in \eqref{eq:inequality} 
coincide and we have to choose 
\[
w_1 = w_2 = \cdots = w_m = 1. 
\]
We use the equidistant grid points 
\[
X_j = \big\{2^{-(j+1)}k:k=1,2,\ldots,2^{j+1}-1\big\},\quad j\ge 0,
\]
in the univariate directions, such that our construction computes 
the kernel interpolant with respect to the traditional $m$-variate 
sparse grid (without points at the boundary).

We first provide a benchmark on the runtime of our implementation.
Figure~\ref{fig:1DTPtiming} shows the cumulative times for the 
setup of the direct solver in samplet coordinates, computation 
of the combination technique index set, the computation of the 
coefficients and the evaluation of the interpolant at the single 
point \([1/3,\ldots,1/3]^T\in\Rbb^m\) for \(m=3,6,9,12,18\)
dimensions and \(J=1,2,\ldots,10\) levels. The combination 
technique index set \eqref{eq:anisoset} is computed up 
front using a single thread, as the computing time is 
negligible compared to the loops in line 2 of 
Algorithm~\ref{algo:SGinterpolant} and in line 3 of
Algorithm~\ref{algo:evalSGinterpolant}, respectively. 
We remark that both loops are trivial to parallelize. 
The reported times in this paragraph have been
computed by using 64 \texttt{OpenMP} 
threads with dynamic load balancing. 

The computation of all univariate direct solvers, which
takes approximately 1.6 seconds for \(J=10\), is dominating 
the overall computation time until roughly $N = 10^5$ sparse
grid points. For larger $N$, the cost for the computation 
of the coefficients of the sparse grid kernel interpolant 
and its evaluation become dominant. As can be seen in
Figure~\ref{fig:1DTPtiming}, the computation times almost 
match the theoretical loglinear rate that is caused by 
the loglinear growth number of nonzero coefficients of 
the samplet compressed kernel matrices and the matrix factors
used for the forward- and backward substitution, respectively.

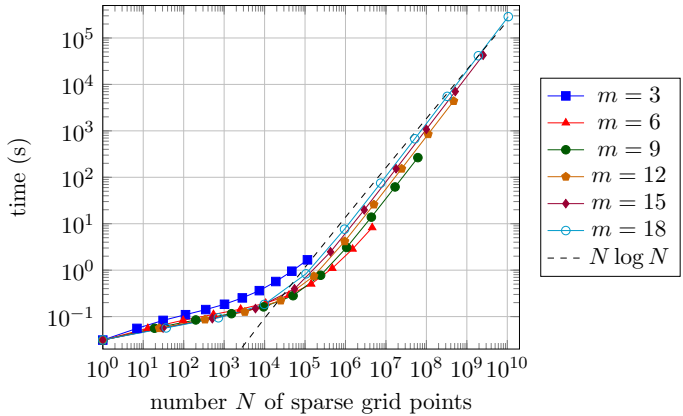
\begin{figure}[hbt]
\begin{center}
\begin{tikzpicture}[scale=0.8]
\begin{loglogaxis}[
xlabel={number $N$ of sparse grid points},
ylabel={time (s)},
grid=major,
xtick={1,1e1,1e2,1e3,1e4,1e5,1e6,1e7,1e8,1e9,1e10},
ytick={1e-1,1e0,1e1,1e2,1e3,1e4,1e5,1e6,1e7,1e8,1e9,1e10},
xmin=1,
xmax=2e10,
ymin=2e-2,
ymax=5e5,
legend style={
at={(1.05,0.5)}, 
anchor=west 
}
]
\addplot[blue, mark=square*, mark options={scale=1}] 
table[x index=1, y expr=\thisrowno{2} + \thisrowno{13}]{./Data/sg1Dtim/data_tabN.txt};
\addlegendentry{$m=3$}
\addplot[red, mark=triangle*, mark options={scale=1}] table[x index=3, y expr=\thisrowno{4} + \thisrowno{13}] {./Data/sg1Dtim/data_tabN.txt};
        \addlegendentry{$m=6$}
\addplot[green!70!black, mark=*, mark options={scale=1}] table[x index=5, y expr=\thisrowno{6} + \thisrowno{13}] {./Data/sg1Dtim/data_tabN.txt};
        \addlegendentry{$m=9$}
        \addplot[orange!80!black, mark=pentagon*, mark options={scale=1}] table[x index=7,y expr=\thisrowno{8} + \thisrowno{13}] {./Data/sg1Dtim/data_tabN.txt};
        \addlegendentry{$m=12$}
        \addplot[purple!80!black, mark=diamond*, mark options={scale=1}] table[x index=9, y expr=\thisrowno{10} + \thisrowno{13}] {./Data/sg1Dtim/data_tabN.txt};
        \addlegendentry{$m=15$}
        \addplot[cyan!80!black, mark=o, mark options={scale=1}] table[x index=11, y expr=\thisrowno{12} + \thisrowno{13}] {./Data/sg1Dtim/data_tabN.txt};
        \addlegendentry{$m=18$}
\addplot[black, dashed] table[x index=11, y expr=0.000001*\thisrowno{11}*ln(\thisrowno{11}),%
skip coords between index={0}{1}] {./Data/sg1Dtim/data_tabN.txt};
\addlegendentry{$N\log N$}
\end{loglogaxis}
\end{tikzpicture}
\caption{\label{fig:1DTPtiming}Computation times for the canonical 
sparse grid on the unit hypercube $(0,1)^m$ and \(m=3,6,9,12,15,18\).}
\end{center}
\end{figure}

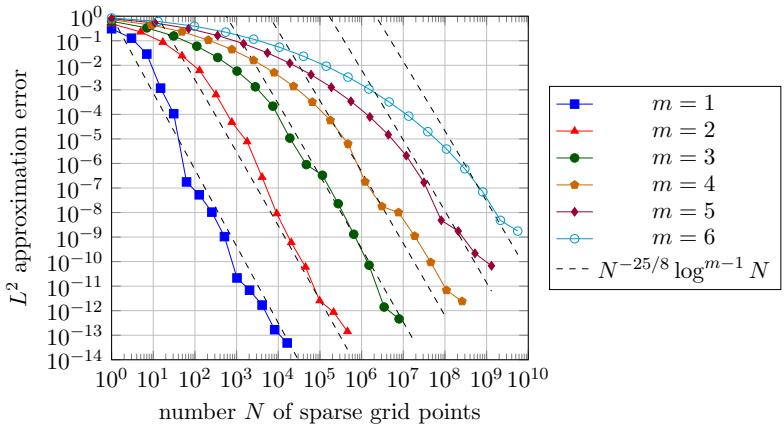
\begin{figure}[hbt]
\begin{center}
\begin{tikzpicture}[scale=0.8]
\begin{loglogaxis}[
xlabel={number $N$ of sparse grid points},
ylabel={$L^2$ approximation error},
grid=major,
xtick={1,10,10^2,10^3,10^4,10^5,10^6,10^7,10^8,10^9,10^10,10^11},
ytick={1e0,1e-1,1e-2,1e-3,1e-4,1e-5,1e-6,1e-7,
1e-8,1e-9,1e-10,1e-11,1e-12,1e-13,1e-14},
xmin=1,
xmax=1e10,
ymin=1e-14,
ymax=1,
legend style={
at={(1.05,0.5)}, 
anchor=west 
}
]
\addplot[blue, mark=square*, mark options={scale=1}] table[x index=1, y index=3,%
skip coords between index={14}{20}]{./Data/sg1D/sg1DTP_dim1.txt};
\addlegendentry{$m=1$}
\addplot[red, mark=triangle*, mark options={scale=1}] table[x index=1, y index=3,%
skip coords between index={15}{20}] {./Data/sg1D/sg1DTP_dim2.txt};
        \addlegendentry{$m=2$}
        \addplot[green!70!black, mark=*, mark options={scale=1}] table[x index=1, y index=3,%
skip coords between index={16}{20}] {./Data/sg1D/sg1DTP_dim3.txt};
        \addlegendentry{$m=3$}
        \addplot[orange!80!black, mark=pentagon*, mark options={scale=1}] table[x index=1, y index=3,%
skip coords between index={18}{20}] {./Data/sg1D/sg1DTP_dim4.txt};
        \addlegendentry{$m=4$}

        \addplot[purple!80!black, mark=diamond*, mark options={scale=1}] table[x index=1, y index=3,%
skip coords between index={18}{20}] {./Data/sg1D/sg1DTP_dim5.txt};
        \addlegendentry{$m=5$}
        \addplot[cyan!80!black, mark=o, mark options={scale=1}] table[x index=1, y index=3,%
skip coords between index={18}{20}] {./Data/sg1D/sg1DTP_dim6.txt};
        \addlegendentry{$m=6$}
\addplot[black, dashed] table[x index=1, y expr=\thisrowno{1}^(-3.125),%
skip coords between index={15}{20}] {./Data/sg1D/sg1DTP_dim1.txt};
       \addlegendentry{$N^{-25/8}\log^{m-1} N$}
\addplot[black, dashed] table[x index=1, y expr=10^3*\thisrowno{1}^(-3.125)*ln(\thisrowno{1}),%
skip coords between index={15}{20}] {./Data/sg1D/sg1DTP_dim2.txt};
\addplot[black, dashed] table[x index=1, y expr=10^7*\thisrowno{1}^(-3.125)*(ln(\thisrowno{1})^2),%
skip coords between index={17}{20}] {./Data/sg1D/sg1DTP_dim3.txt};
\addplot[black, dashed] table[x index=1, y expr=10^9*\thisrowno{1}^(-3.125)*(ln(\thisrowno{1})^3),%
skip coords between index={17}{20}] {./Data/sg1D/sg1DTP_dim4.txt};
\addplot[black, dashed] table[x index=1, y expr=10^12*\thisrowno{1}^(-3.125)*(ln(\thisrowno{1})^4),%
skip coords between index={18}{20}] {./Data/sg1D/sg1DTP_dim5.txt};
\addplot[black, dashed] table[x index=1, y expr=10^14*\thisrowno{1}^(-3.125)*(ln(\thisrowno{1})^5),%
skip coords between index={18}{20}] {./Data/sg1D/sg1DTP_dim6.txt};
\end{loglogaxis}
\end{tikzpicture}
\end{center}
\caption{\label{fig:1DTP}Convergence of the kernel interpolant
on the canonical sparse grid in $(0,1)^m$.}
\end{figure}

In Figure~\ref{fig:1DTP}, we show the convergence of the
interpolant in the $L^2$-norm, exemplarily for \(m=1,\ldots,6\). The
$L^2$-norm of the error is approximated by using a tensorized four 
point Gauss-Legendre quadrature, which exhibits \(4^m\) quadrature
points and, hence, becomes very costly in higher dimensions.
We indeed observe the theoretical 
convergence behaviour $N^{-\beta}\log^{m-1} N$ with $\beta = 25/8$ as 
predicted by Theorem~\ref{thm:cost complexity} for $t_i'=25/8$ and $t_i=0$.
Nonetheless, we also see that the constant in front of the approximation 
rate increases as the spatial dimension $m$ increases, which is a well 
known observation for sparse grid constructions.

\subsection{Tensor product of unit hypercubes in ${\bf 1+2+3}$ dimensions}
Next, we consider kernel interpolation on the unit
hypercube $[0,1]^6$ by splitting it into the product
$\Omega_1\times\Omega_2\times\Omega_3$ with
\[
\Omega_1 = [0,1],\quad \Omega_2 = [0,1]^2,\quad\Omega_3 = [0,1]^3.
\]
The tensor product kernel, which we consider, is
\[
{\bs\kappa}\isdef \kappa_1\otimes\kappa_2\otimes\kappa_3,
\]
where \(\kappa_d:\mathbb{R}^d\times\mathbb{R}^d\to\mathbb{R}\) 
is the Mat\'ern-$\big(\frac{25}{16}-\frac{d}{2}\big)$ kernel, 
$d=1,2,3$. Thus, each corresponding $d$-variate RKHS is isomorphic 
to the Sobolev space \(H^{25/16}\big([0,1]^d\big)\). Hence, the 
highest convergence rate in a particular direction $\Omega_i$
is \(25/8=3.125\).

We use a regular grid for each of the subregions, i.e., 
the interpolation points 
\[
X_j^{(1)}\isdef\big\{2^{-(j+1)}k:k=0,1,\ldots,2^{j+1}\big\},\quad j\ge 0,
\]
on $\Omega_1$ are chosen equidistantly, while $X_j^{(2)}\isdef 
\big(X_j^{(1)}\big)^2$ and $X_j^{(3)}\isdef\big(X_j^{(1)}\big)^3$.
Therefore, we have $\big|X_j^{(i)}\big| = (2^{j+1}+1)^i$ points
per level $j$ for $i=1,2,3$. In particular, there holds 
\(h_{X,\Omega_i} = 2^{-(j+1)}\sqrt{i}\) and \(q_X=2^{-j}\) 
for \(i=1,2,3\) by construction.

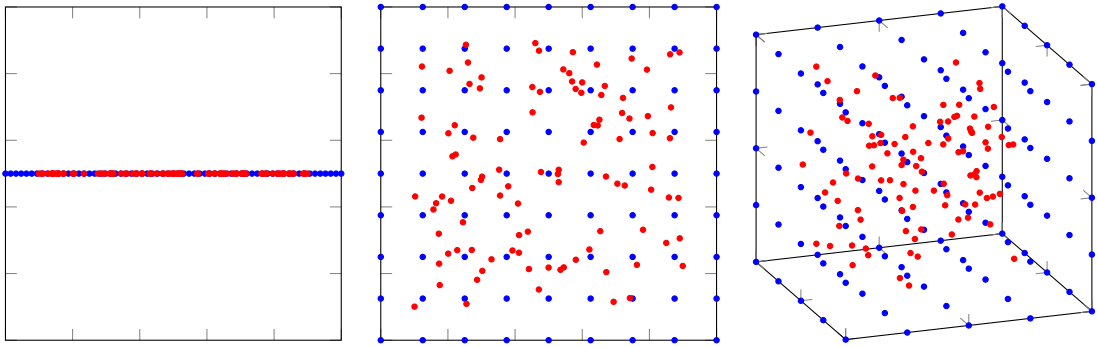
\begin{figure}[hbt]
\begin{center}
\begin{tikzpicture}
  \begin{axis}[
    width=0.4\textwidth,  
  height=0.4\textwidth,      
    xticklabels={},
    yticklabels={},
    xmin=0,xmax=1,ymin=0,ymax=1
  ]
      \addplot[ 
      only marks, 
      mark=o,
      mark options={scale=0.8},
      color=blue] table[x index=0, y expr=0.5] {./Data/sg123w/Pdim1_lvl6.txt};
      \addplot[ 
      only marks, 
      mark=x,
      mark options={scale=1},
      color=red] table[x index=0, y expr=0.5] {./Data/sg123w/Peval_dim1.txt};
  \end{axis}
\end{tikzpicture}
\begin{tikzpicture}
  \begin{axis}[
  width=0.4\textwidth,
    height=0.4\textwidth,   
      xticklabels={},
    yticklabels={},
        xmin=0,xmax=1,ymin=0,ymax=1
  ]
      \addplot[ 
      only marks, 
      mark=o,
      mark options={scale=0.8},
      color=blue] table[x index=0, y index=1] {./Data/sg123w/Pdim2_lvl4.txt};
      \addplot[ 
      only marks, 
      mark=x,
      mark options={scale=1},
      color=red] table[x index=0, y index=1] {./Data/sg123w/Peval_dim2.txt};
  \end{axis}
\end{tikzpicture}
\begin{tikzpicture}
  \begin{axis}[
  width=0.4\textwidth,  
  height=0.4\textwidth,  
  xticklabels={},
    yticklabels={},
        zticklabels={},
        xmin=0,xmax=1,ymin=0,ymax=1,zmin=0,zmax=1,
 view={70}{20}
  ]
      \addplot3[
      only marks, 
      mark=o,
      mark options={scale=0.8},
      color=blue] table[x index=0, y index=1, z index=2] {./Data/sg123w/Pdim3_lvl2.txt};
      \addplot3[ 
      only marks, 
      mark=x,
      mark options={scale=1},
      color=red] table[x index=0, y index=1, z index=2] {./Data/sg123w/Peval_dim3.txt};
  \end{axis}
\end{tikzpicture}
\end{center}
\caption{\label{fig:Setup123}Sketch of the regular grid points
(blue) and the evaluation points (red) on the unit interval, 
the unit square, and the unit cube.}
\end{figure}

\begin{figure}
\begin{center}
\begin{tikzpicture}[scale=0.8]
\begin{semilogyaxis}[
xlabel={level $j$},
ylabel={relative error},
grid=major,
xtick={0,1,2,3,4,5,6},
ytick={1e0,1e-1,1e-2,1e-3,1e-4,1e-5,1e-6,1e-7},
xmin=0,
xmax=6,
ymin=1e-7,
ymax=1,
legend style={
at={(1.05,0.5)}, 
anchor=west 
}
]
\addplot[blue, mark=triangle*, mark options={scale=1}] table[x index=0, y index=5]{./Data/sg123/benchsolve.txt};
\addlegendentry{$d=1$}
\addplot[red, mark=square*, mark options={scale=1}] table[x index=0, y index=6]{./Data/sg123/benchsolve.txt};
\addlegendentry{$d=2$}
\addplot[green, mark=pentagon*, mark options={scale=1}] table[x index=0, y index=7]{./Data/sg123/benchsolve.txt};
\addlegendentry{$d=3$}
\addplot[black, dashed] table[x index=0, y index=1]
{
0   1
6   2.2682e-06
};
\addlegendentry{$h_j^{-3.125}$}
\end{semilogyaxis}
\end{tikzpicture}
\end{center}
\caption{\label{fig:123Dbench}Convergence of the 
$d$-variate kernel approximation in case of the 
hypercube $[0,1]^d$ for $d=1,2,3$.}

\end{figure}
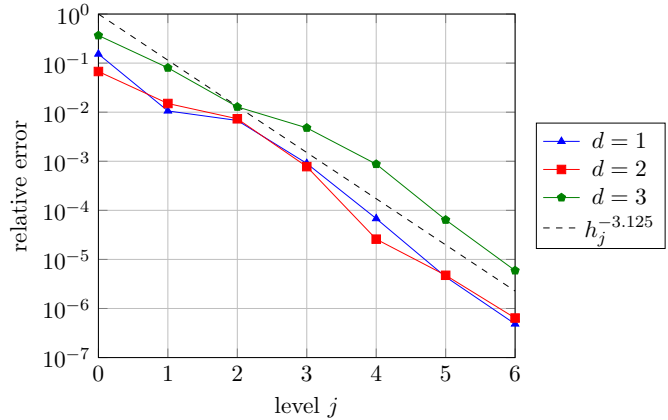
\begin{figure}
\begin{center}
\begin{tikzpicture}[scale=0.8]
\begin{loglogaxis}[
xlabel={number $N$ of degrees of freedom},
ylabel={relative error},
grid=major,
xtick={1e0,1e1,1e2,1e3,1e4,1e5,1e6,1e7,1e8},
ytick={1e0,1e-1,1e-2,1e-3,1e-4,1e-5,1e-6,1e-7},
xmin=0.1,
xmax=1e8,
ymin=1e-5,
ymax=1,
legend style={
at={(1.05,0.5)}, 
anchor=west 
}
]
\addplot[blue, mark=triangle*, mark options={scale=1}] table[x index=1, y index=2]{./Data/sg123/sparseGrid123.txt};
\addlegendentry{accuracy equilibrated}
\addplot[green, mark=square*, mark options={scale=1}] table[x index=1, y index=2,%
skip coords between index={18}{20}]{./Data/sg123w_work/sparseGrid123w.txt};
\addlegendentry{cost-benefit equilibrated}
\addplot[red, mark=pentagon*, mark options={scale=1}] table[x index=1, y index=2,%
skip coords between index={18}{20}]{./Data/sg123w/sparseGrid123w.txt};
\addlegendentry{DoF equilibrated}
\addplot[black, dashed] table[x index=0, y index=1]
{
10   100
1e8  4.6416e-07
};
\addlegendentry{$N^{-1.04}$}
\end{loglogaxis}
\end{tikzpicture}
\end{center}
\caption{\label{fig:123D}Convergence rates of the kernel approximant 
with respect to different sparse grids on $(0,1)\times (0,1)^2\times 
(0,1)^3$.}
\end{figure}
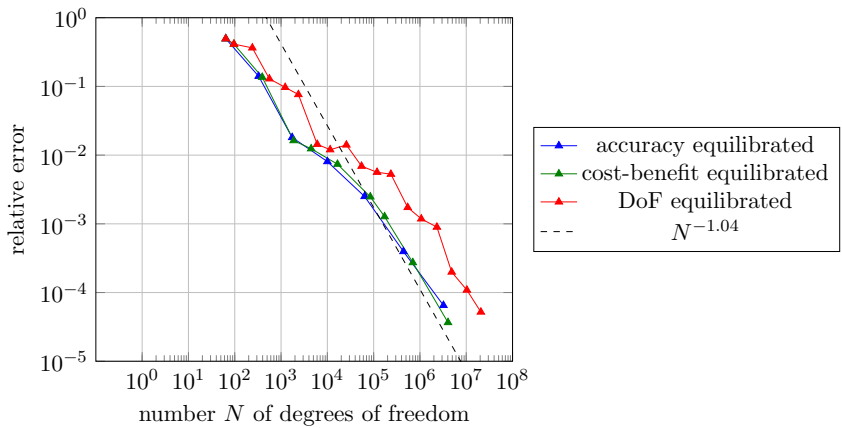

After the kernel interpolant has been computed, it is 
evaluated at 100 uniform random points for each subregion
$\Omega_i$, located in a hypercube of distance 0.1 from the 
respective subregion's boundary. 
We refer to 
Figure~\ref{fig:Setup123} for a visualization of the presented 
setup. Therein, the evaluation points are indicated in red.

Next, we consider the $d$-variate approximation for $d = 1,2,3$ 
to validate the appropriate choice of the number of vanishing 
moments of the samplets and the compression parameter $\eta$ 
for the matrix compression, and thus for our solver for the 
different subproblem directions. The convergence of the approximant 
with respect to each particular subregion $\Omega_i$ is shown in 
Figure~\ref{fig:123Dbench}. Indeed, we observe the convergence 
rate $h_j^{-3.125}$ in all three case as predicted, so that 
we can be sure that the compression works correctly.

We next consider the kernel interpolation of the respective
sparse grid. For the present setting, we can summarize the 
parameters as
\begin{equation}\label{eq:parameters_1}
d_1 = 1, \quad d_2 = 2, \quad d_3 = 3,
\quad s_1 = s_2 = s_3 = \frac{25}{16}.
\end{equation}
Therefore, choosing the weights 
\begin{equation}\label{eq:parameters_2}
w_1 = w_2 = w_3 = 1
\end{equation}
for the sparse grid construction equilibrates the accuracies 
in the particular directions, while choosing the weights 
\begin{equation}\label{eq:parameters_3}
w_1 = 1/3, \quad w_2 = 2/3, \quad w_3 = 1
\end{equation}
equilibrates their degrees of freedom. For the equlibration 
of the cost-benefit-rate, we have to choose 
\begin{equation}\label{eq:parameters_4}
w_1 = 33/49, \quad w_2 = 33/41, \quad w_3 = 1.
\end{equation}
The resulting convergence rates with respect to the number 
$N$ of the degrees of freedom are given in Figure~\ref{fig:123D}.
The expected rate of convergence is $N^{-\beta}$ with $\beta 
= 25/24 \approx 1.04$ up to polylogarithmic terms. Indeed, 
after some preasymptotic regime, we observe the predicted
convergence rate of $N^{-1,04}$.

\subsection{Tensor product of general regions in ${\bf 1+2+3}$ dimensions}
\label{sct:bunny}
In our final numerical experiment, we consider the tensor
product of uniformly chosen random points on the unit interval 
$\Omega_1 = [0,1]$, of uniformly chosen random points on the sphere
$\Omega_2 = \mathbb{S}^2$, and the nodal points of a tetrahedral
mesh of a rabbit $\Omega_3 \subset\mathbb{R}^3$ (involving 
three-dimensional points at the surface of the well-known 
Stanford bunny and in the interior of the bunny). We refer to 
Figure~\ref{fig:Setup123_geos} for an illustration of this 
geometrical situation. 

Table~\ref{tab:pts123equi} lists the number of points per level 
for each of the geometries and all considered combinations.
As can be seen, when proceeding from level \(j\) to \(j+1\),
the number of points approximately doubles on the interval.
For the sphere, which is a two-dimensional manifold, i.e., 
$d_2 = 2$, we asymptotically observe the factor four. Moreover, 
the number of points of the rabbit grows with a factor about 
6--8.

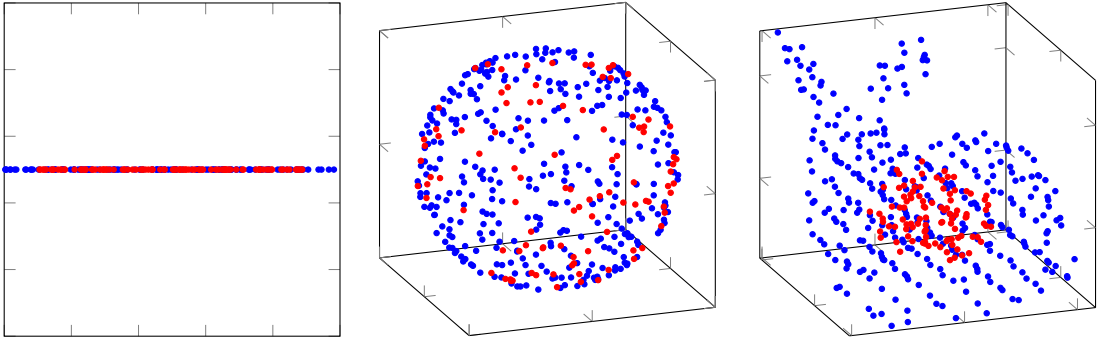
\begin{figure}
\begin{center}
\begin{tikzpicture}
  \begin{axis}[
    width=0.4\textwidth,  
    height=0.4\textwidth,      
    xticklabels={},
    yticklabels={},
    xmin=0,xmax=1,ymin=0,ymax=1
  ]
      \addplot[ 
      only marks, 
      mark=o,
      mark options={scale=0.8},
      color=blue] table[x index=0, y expr=0.5] {./Data/sg123w_geos/Pdim1_lvl6.txt};
      \addplot[ 
      only marks, 
      mark=x,
      mark options={scale=1},
      color=red] table[x index=0, y expr=0.5] {./Data/sg123w_geos/Peval_dim1.txt};
  \end{axis}
\end{tikzpicture}
\begin{tikzpicture}
  \begin{axis}[
  width=0.4\textwidth,
    height=0.4\textwidth,   
      xticklabels={},
    yticklabels={},
        zticklabels={},
        xmin=-1,xmax=1,ymin=-1,ymax=1,zmin=-1,zmax=1,
                 view={70}{20}
  ]
      \addplot3[ 
      only marks, 
      mark=o,
      mark options={scale=0.8},
      color=blue] table[x index=0, y index=1, z index=2] {./Data/sg123w_geos/Pdim2_lvl3.txt};
      \addplot3[ 
      only marks, 
      mark=x,
      mark options={scale=1},
      color=red] table[x index=0, y index=1, z index=2] {./Data/sg123w_geos/Peval_dim2.txt};
  \end{axis}
\end{tikzpicture}
\begin{tikzpicture}
  \begin{axis}[
  width=0.4\textwidth,  
  height=0.4\textwidth,  
  xticklabels={},
    yticklabels={},
        zticklabels={},
        xmin=-0.52,xmax=0.33,ymin=-0.51,ymax=0.58,zmin=-0.39,zmax=0.72,
          view={70}{20}
  ]
      \addplot3[
      only marks, 
      mark=o,
      mark options={scale=0.8},
      color=blue] table[x index=2, y index=0, z index=1] {./Data/sg123w_geos/Pdim3_lvl3.txt};
      \addplot3[ 
      only marks, 
      mark=x,
      mark options={scale=1},
      color=red] table[x index=0, y index=1, z index=2] {./Data/sg123w_geos/Peval_dim3.txt};
  \end{axis}
\end{tikzpicture}
\caption{\label{fig:Setup123_geos}Sketch of the quasi-uniform 
points (blue) and evaluation points (red) on the unit interval, 
the unit sphere, and the Stanford bunny.}
\end{center}
\end{figure}

On the particular subregions $\Omega_i$, we have
unstructured, quasi-uniform data sites, which we coarsen 
by employing Algorithm~\ref{alg:UnifSubs} as given in 
Subsection~\ref{sct:coarsening}. 
On the unit interval, we start from a
point set with $4\,319\,030$ points,
a separation distance of $5.32\cdot 10^{-14}$
and a fill distance of $2.62\cdot 10^{-6}$, 
while on the sphere, we start from a
point set with $2\,879\,320$ points,
a separation distance of $8.13\cdot 10^{-7}$
and a fill distance of $5.09\cdot 10^{-3}$,
and finally on
the rabbit, we start from a
point set with $1\,439\,610$ points,
a separation distance of $5.25\cdot 10^{-4}$
and a fill distance of $8.93\cdot 10^{-3}$.
It can be seen from Table~\ref{tab:qhscattered} that 
the fill distance \(h_{X_j,X}\), which we consider an 
approximation of \(h_{X_j,\Omega}\), approximately halves 
with respect to the level in each particular example, as 
desired. On the other hand, the separation distance stays 
proportional to the fill distance. 
For the sphere we remark that the separation distance and
the fill distance have been approximated using the Euclidean norm.
Therefore, we have
a nested sequence of sets $X_j^{(i)}$ of data sites 
which satisfy $\big|X_j^{(i)}\big|\sim 2^{j i}$, 
$i=1,2,3$.

Moreover, after the sparse grid kernel interpolant is 
computed, it is evaluated at the product of randomly 
distributed points $X^{(i)}_{\text{eval}}\subset\Omega_i$. 
These are, in case of the interval and the rabbit, again 
chosen with a certain distance from the boundary.
We refer again to Figure~\ref{fig:Setup123_geos} 
for a visualization. The convergence of the univariate solvers 
is shown in Figure~\ref{fig:123D_goesbench}. As can be seen, 
all of them achieve the expected convergence rate of $h_j^{-3.125}$.

\begin{table}[htb]
\begin{center}
\begin{tabular}{|c|c|c|c| }
        \cline{2-4}
         \multicolumn{1}{c|}{}
         & Interval & Sphere & Rabbit \\ \hline
         $j=0$ & 1 & 1 & 1 \\ \hline
         $j=1$ & 3 & 9 & 9 \\ \hline
         $j=2$ & 7 & 65 & 58 \\ \hline
         $j=3$ & 15 & 337 & 326 \\ \hline
         $j=4$ & 31 & 1497 & 1933 \\ \hline
         $j=5$ & 63 & 6246 & 12482 \\ \hline
         $j=6$ & 127 & 24952 & 88489 \\ \hline
         $j=7$ & 255 & 97224 & --- \\ \hline
         $j=8$ & 511 & --- & --- \\ \hline
         $j=9$ & 1023 & --- & --- \\ \hline
         $j=10$ & 2047 & --- & --- \\ \hline
         $j=11$ & 4095 & --- & --- \\ \hline
         $j=12$ & 8191 & --- & --- \\ \hline
         $j=13$ & 16383 &--- & --- \\ \hline
         $j=14$ & 32767 & --- & --- \\ \hline
         $j=15$ & 65535 & --- & --- \\ \hline
         $j=16$ & 131071 & --- & --- \\ \hline
         $j=17$ & 262143 & --- & --- \\ \hline
\end{tabular}
\end{center}
\caption{\label{tab:pts123equi}Numbers $N$ of points 
per level that enter the sparse grid construction
for the interval ($d=1$), the sphere ($d=2$), and 
the rabbit ($d=3$).}
\end{table}

\begin{table}
\begin{center}
\begin{tabular}{|c|c|c|c|c|c|c|}
\cline{2-7}
 \multicolumn{1}{c|}{}
 & \multicolumn{2}{c|}{Interval} & 
\multicolumn{2}{c|}{Sphere} &
\multicolumn{2}{c|}{Rabbit} \\\cline{2-7}
 \multicolumn{1}{c|}{}
 & $q_{X_j}$ & $h_{X_j,X_J}$ & $q_{X_j}$ & $h_{X_j,X_J}$ & $q_{X_j}$ & $h_{X_j,X_J}$ \\\hline
$j=0$ & ---                 & $4.92\cdot 10^{-1}$ & ---                 & $2.00$              & ---                 & $8.34\cdot{10^{-1}}$ \\
$j=1$ & $2.50\cdot 10^{-1}$ & $2.50\cdot 10^{-1}$ & $5.55\cdot 10^{-1}$ & $1.16$              & $3.57\cdot 10^{-1}$ & $4.83\cdot 10^{-1}$ \\
$j=2$ & $1.25\cdot 10^{-1}$ & $1.25\cdot 10^{-1}$ & $7.94\cdot 10^{-4}$ & $4.51\cdot 10^{-1}$ & $2.49\cdot 10^{-2}$ & $2.41\cdot 10^{-1}$ \\
$j=3$ & $6.25\cdot 10^{-2}$ & $6.25\cdot 10^{-2}$ & $6.23\cdot 10^{-4}$ & $2.42\cdot 10^{-1}$ & $5.11\cdot 10^{-3}$ & $1.22\cdot 10^{-1}$ \\
$j=4$ & $3.12\cdot 10^{-2}$ & $3.12\cdot 10^{-2}$ & $4.22\cdot 10^{-5}$ & $1.09\cdot 10^{-1}$ & $4.04\cdot 10^{-3}$ & $6.43\cdot 10^{-2}$ \\
$j=5$ & $1.56\cdot 10^{-2}$ & $1.56\cdot 10^{-2}$ & $4.22\cdot 10^{-5}$ & $5.52\cdot 10^{-2}$ & $2.87\cdot 10^{-3}$ & $3.41\cdot 10^{-2}$ \\
$j=6$ & $7.81\cdot 10^{-3}$ & $7.81\cdot 10^{-3}$ & $4.22\cdot 10^{-5}$ & $2.92\cdot 10^{-2}$ & $1.99\cdot 10^{-3}$ & $1.83\cdot 10^{-2}$ \\\hline
\end{tabular}
\end{center}
\caption{\label{tab:qhscattered}Separation distance and fill 
distance for the different geometries.}
\end{table}

The parameters for the construction of the sparse grid 
are the same as in the previous experiment, i.e., the 
parameters are given as in \eqref{eq:parameters_1} for 
the underlying approximation spaces. Therefore, the 
accuracy-equilibrated sparse grid is given by the weights in 
\eqref{eq:parameters_2}, the degrees-of-freedom-equilibrated 
sparse grid is given by the weights in \eqref{eq:parameters_3}, 
and finally the cost-benefit-equilibrated sparse grid is given 
by the weights in \eqref{eq:parameters_4}. As can be inferred 
from Figure~\ref{fig:123D_goes}, the different settings produce 
essentially the same convergence rate, which indeed shows the 
$N^{-1.04}$ behavior as the number $N$ of sparse grid points
increases.

\begin{figure}
\begin{center}
\begin{tikzpicture}[scale=0.8]
\begin{semilogyaxis}[
xlabel={level $j$},
ylabel={relative error},
grid=major,
xtick={0,1,2,3,4,5,6},
ytick={1e0,1e-1,1e-2,1e-3,1e-4,1e-5,1e-6,1e-7,1e-8},
xmin=0,
xmax=6,
ymin=1e-8,
ymax=1,
legend style={
at={(1.05,0.5)}, 
anchor=west 
}
]
\addplot[blue, mark=triangle*, mark options={scale=1}] table[x index=0, y index=5]{./Data/sg123_geos/benchsolve.txt};
\addlegendentry{$d=1$}
\addplot[red, mark=square*, mark options={scale=1}] table[x index=0, y index=6]{./Data/sg123_geos/benchsolve.txt};
\addlegendentry{$d=2$}
\addplot[green, mark=pentagon*, mark options={scale=1}] table[x index=0, y index=7]{./Data/sg123_geos/benchsolve.txt};
\addlegendentry{$d=3$}
\addplot[black, dashed] table[x index=0, y index=1]
{
0   1
6   2.2682e-06
};
\addlegendentry{$h_j^{-3.125}$}
\end{semilogyaxis}
\end{tikzpicture}
\end{center}
\caption{\label{fig:123D_goesbench}Convergence of the 
kernel interpolant on the interval ($d=1$), the sphere ($d=2$),
and the rabbit ($d=3$).}
\end{figure}
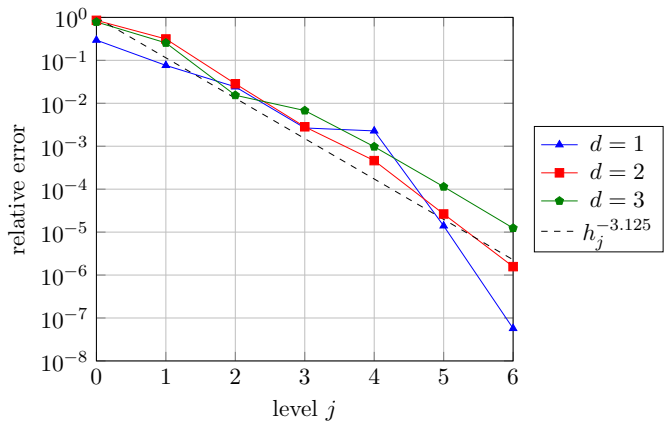
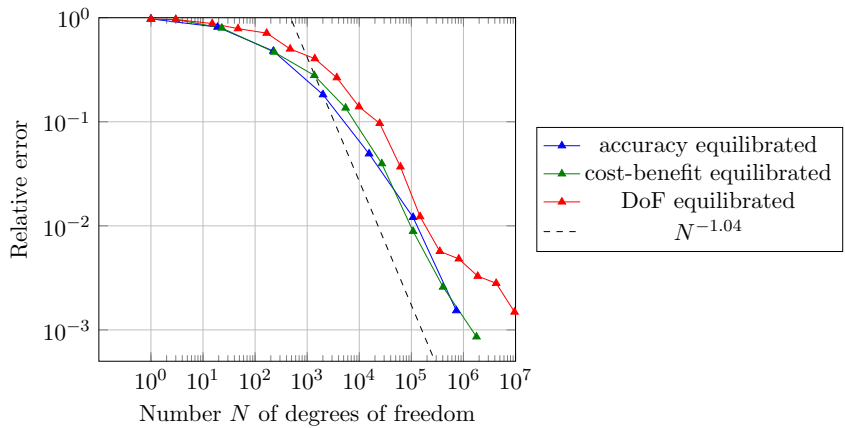
\begin{figure}
\begin{center}
\begin{tikzpicture}[scale=0.8]
\begin{loglogaxis}[
xlabel={Number $N$ of degrees of freedom},
ylabel={Relative error},
grid=major,
xtick={1e0,1e1,1e2,1e3,1e4,1e5,1e6,1e7,1e8},
ytick={1e0,1e-1,1e-2,1e-3,1e-4,1e-5,1e-6,1e-7},
xmin=0.1,
xmax=1e7,
ymin=5e-4,
ymax=1,
legend style={
at={(1.05,0.5)}, 
anchor=west 
}
]
\addplot[blue, mark=triangle*, mark options={scale=1}] table[x index=1, y index=2]{./Data/sg123_geos/sparseGrid123.txt};
\addlegendentry{accuracy equilibrated}
\addplot[green, mark=square*, mark options={scale=1}] table[x index=1, y index=2,%
skip coords between index={18}{20}]{./Data/sg123w_geos_work/sparseGrid123w.txt};
\addlegendentry{cost-benefit equilibrated}
\addplot[red, mark=pentagon*, mark options={scale=1}] table[x index=1, y index=2,%
skip coords between index={18}{20}]{./Data/sg123w_geos/sparseGrid123w.txt};
\addlegendentry{DoF equilibrated}
\addplot[black, dashed] table[x index=0, y index=1]
{
10   100
1e8  4.6416e-07
};
\addlegendentry{$N^{-1.04}$}
\end{loglogaxis}
\end{tikzpicture}
\end{center}
\caption{\label{fig:123D_goes}Convergence rates of the kernel approximant 
with respect to different sparse grids on the product of general subregions
in $1+2+3$ dimensions.}
\end{figure}

\section{Conclusion}\label{sct:conclusio}
In the present article, we have considered kernel interpolation 
on sparse grids in Sobolev spaces of dominating mixed derivatives. 
We have discussed the optimal construction of the sparse grid in case 
of product regions of arbitrary dimension and of arbitrary smoothness 
with respect to the particular regions. Especially, we derived
improved estimates on the approximation error, using duality 
arguments, provided that the function to be interpolated exhibits 
additional smoothness. Our convergence analysis is based 
entirely on the doubling trick. If the doubling trick does not 
apply, we are only allowed to choose ${\bs t}' = {\bs s}$ in 
Section~\ref{sec:multivariate}. Our analysis, however, can be
adopted to this case by obvious modifications. Specifically, 
the result of Theorem \ref{thm:accuracy} becomes
\[
  \big\|f-\widehat{\bs P}_J^{\bs w}f\big\|_{{\bs H}^{\bs t}(\bs\Omega)}
    \lesssim 2^{-J\min\{\frac{s_1-t_1}{w_1},\ldots,\frac{s_m-t_m}{w_m}\}} 
    J^{P-1}\|f\|_{{\bs H}^{\bs s}(\bs\Omega)},
    \quad {\bs 0}\le {\bs t}<{\bf s}.
\]
Consequently, Theorem \eqref{thm:cost complexity} 
then reads
\[
\big\|f-\widehat{\bs P}_J^{\bs w}f\big\|_{{\bs H}^{\bs t}(\bs\Omega)}
  	\lesssim N^{-\beta}(\log N)^{(P-1)+\beta(R-1)}
		\|f\|_{{\bs H}^{\bs s}(\bs\Omega)},
    \quad {\bs 0}\le {\bs t}<{\bf s},
\]
with
\[
  \beta \isdef \frac{\min\{(s_1-t_1)/w_1,\ldots,(s_m-t_m)/w_m\}}
  		{\max\{d_1/w_1,\ldots,d_m/w_m\}}.
\]

For the numerical solution of the interpolation problem,
we have proposed an efficient algorithm that combines the
sparse grid combination technique with a fast direct solver for 
nonlocal operators on the subproblems. We presented the results 
of numerical experiments in up to 18 dimensions and with billions 
of degrees of freedoms in the sparse grid, which validate 
the presented theory. We emphasize that the problem size would 
have been restricted seriously without the application of an 
efficient method for dealing with the nonlocal kernel matrices,

We finally point out that the proposed sparse grid kernel 
interpolation is also applicable with straightforward modification 
when dimension weights are present. In this case, the logarithmic 
factors might be removed and even dimension-robustness can be achieved 
provided that the weights decay sufficiently fast. Such a situation is 
typically found in uncertainty quantification or machine learning, 
see \cite{Dung2,Zech} for example.

\appendix
\section{An inner product for the doubling trick}\label{app:A}
We construct here an inner 
product in $H^s(\Omega)$ which satisfies the 
assumption \eqref{eq:HsCSU}. The resulting
reproducing kernel then enables the doubling 
trick from Lemma~\ref{lem:doubling} that we exploit 
in the analysis of Section~\ref{sec:multivariate}.

\begin{lemma}\label{lem:existenceCSU}
Let $\Omega\subset\mathbb{R}^d$ be a Lipschitz domain.
Then, there exists an inner product \((\cdot,\cdot)_E\)
on \(H^s(\Omega)\) such that
\[
  (u,v)_{E}
  	\lesssim \|u\|_{L^2(\Omega)} \|v\|_{H^{2s}(\Omega)}
\]
for all $u\in H^s(\Omega)$ and $v\in H^{2s}(\Omega)$.
\end{lemma}
\begin{proof}
Let \(E\colon H^r(\Omega)\to H^r(\Rbb^d)\),
\(0\leq r\leq 2s\), be a uniform extension operator, i.e.,
\[
\|Eu\|_{H^r(\Rbb^d)}\leq C\|u\|_{H^r(\Omega)}
\quad\text{for all }0\leq r\leq 2s
\]
for some \(C>0\).
A suitable extension operator is the one introduced 
by Rychkov in \cite{Rychkov} for example. We set
\[
  (u,v)_{E}\isdef (Eu,Ev)_{H^r(\Rbb^d)}\quad\text{for all }0\leq r\leq 2s.
\]
Especially, we have
\[
  (u,v)_{E}\leq\|Eu\|_{H^r(\Rbb^d)}
  \|Ev\|_{H^r(\Rbb^d)}\leq C^2\|u\|_{H^r(\Omega)}\|v\|_{H^r(\Omega)}.
\]
Therefore, the bilinear form is continuous.
Similarly, we find by the monotonicity of the integral that
\[
\|u\|_{H^r(\Omega)}^2\leq\|Eu\|_{H^r(\Rbb^d)}^2
=(Eu,Eu)_{H^r(\Rbb^d)}=(u,u)_{E}
\]
due to \(Eu|_\Omega= u\), which shows the ellipticity. 
As a consequence, the bilinear form \((\cdot,\cdot)_E\) 
defines an inner product on \(H^r(\Omega)\) for \(0\leq r\leq s\) 
and an equivalent norm. Finally, there holds by Plancherel's 
theorem that
\begin{align*}
  (u,v)_{E}&= (Eu,Ev)_{H^s(\Rbb^d)}=
  \int_{\Rbb^d}\widehat{Eu}
  \overline{\widehat{Ev}}(1+\|{\bs\xi}\|_2^2)^s\operatorname{d}\!{\bs\xi}\\
  &\leq\sqrt{\int_{\Rbb^d}\big|\widehat{Eu}\big|^2\operatorname{d}\!{\bs\xi}}
    \sqrt{\int_{\Rbb^d}
  \big|\widehat{Ev}\big|^2(1+\|{\bs\xi}\|_2^2)^{2s}\operatorname{d}\!{\bs\xi}}\\
  &=\|Eu\|_{L^2(\Rbb^d)}\|Ev\|_{H^{2s}(\Rbb^d)}\leq C^2\|u\|_{L^2(\Omega)}\|v\|_{H^{2s}(\Omega)}.
\end{align*}
\end{proof}

In view of the previous lemma, the operator 
\(A\isdef E^\star E\colon H^s(\Omega)\to H^s(\Omega)\) 
is a symmetric, elliptic and continuous operator with
\[
(u,v)_{E}=(Au,v)_{H^s(\Omega)}.
\]
With respect to the \((\cdot,\cdot)_{E}\) inner product, 
we obtain for the reproducing kernel
\[
u(y)=\big(\kappa({\cdot},y),u\big)_{E}=\big({A}\kappa({\cdot},y),u\big)_{H^s(\Omega)}
\]
and, therefore,
\[
\kappa({\cdot},y)=A^{-1}R\delta_y,
\]
where \(R\colon [H^s(\Omega)]'\to H^s(\Omega)\) is the Riesz
isometry with respect to the \(H^s(\Omega)\)-inner product.

\section*{Acknowledgments}
Michael Griebel was supported by the \emph{Hausdorff Center for Mathematics} 
(HCM) in Bonn, funded by the Deutsche Forschungsgemeinschaft (DFG, German 
Research Foundation) under Germany's Excellence Strategy -- EXC-2047/1 -- 
390685813 and by the CRC 1060 \emph{The Mathematics of Emergent Effects} 
-- 211504053 of the Deutsche Forschungsgemeinschaft. Helmut Harbrecht was 
funded in parts by the Swiss National Science Foundation (SNSF) through the 
Vietnamese-Swiss Joint Research Project IZVSZ2\_229568. Michael Multerer
was funded in parts by the Swiss Federal Office of Energy SFOE 
as part of the SWEET project SURE and by the SNSF starting grant 
``Multiresolution methods for unstructured data'' (TMSGI2\_211684).
Finally, the authors thank Felix Bartel,
especially with respect to Subsection~\ref{sct:comparison},
R\"udiger Kempf and Holger Wendland
for fruitful discussions and helpful remarks.

\end{document}